\documentclass[12pt]{article}
\usepackage{amsfonts}
\usepackage{amsmath}
\usepackage{amssymb}
\usepackage{amsthm}
\usepackage{enumerate}
\setlength{\parskip}{1em}
\makeatletter
\def\imod#1{\allowbreak\mkern10mu({\operator@font mod}\,\,#1)}
\makeatother

\newtheorem{theorem}{Theorem}[section]

\newtheorem{corollary}{Corollary}[section]
\newtheorem{fact}{Fact}[section]

\theoremstyle{definition}
\newtheorem{definition}{Definition}[section]
\newtheorem{remark}{Remark}[section]

\begin{document}
\begin{center}
\vskip 1cm{\LARGE\bf Unitary Multiperfect Numbers in Certain Quadratic Rings 
\vskip 1cm
\large
Colin Defant\footnote{This work was supported by National Science Foundation grant no. 1262930.}\\
Department of Mathematics\\
University of Florida\\
United States\\
cdefant@ufl.edu}
\end{center}
\vskip .2 in

\begin{abstract}
A unitary divisor $c$ of a positive integer $n$ is a positive divisor of $n$ that is relatively prime to $\displaystyle{\frac{n}{c}}$. For any integer $k$, the function $\sigma_k^*$ is a multiplicative arithmetic function defined so that $\sigma_k^*(n)$ is the sum of the $k^{th}$ powers of the unitary divisors of $n$. We provide analogues of the functions $\sigma_k^*$ in imaginary quadratic rings that are unique factorization domains. We then explore properties of what we call $n$-powerfully unitarily $t$-perfect numbers, analogues of the unitary multiperfect numbers that have been defined and studied in the integers. We end with a list of several opportunities for further research. 

\bigskip

\noindent 2010 {\it Mathematics Subject Classification}:  Primary 11R11; Secondary 11N80.

\noindent \emph{Keywords: } Unitary divisor, quadratic ring, multiperfect number.
\end{abstract}

\section{Introduction} 
We convene to let $\mathbb{N}$ and $\mathbb{P}$ denote the set of positive integers and the set of (integer) prime numbers, respectively. 
\par 
The arithmetic functions $\sigma_k$ are defined, for every integer $k$, by \\ $\displaystyle{\sigma_k(n)=\sum_{\substack{c\vert n\\c>0}}c^k}$. The unitary divisor functions $\sigma_k^*$ are defined by \\ $\displaystyle{\sigma_k^*(n)=\sum_{\substack{0<c\vert n\\ \gcd(c,\frac{n}{c})=1}}c^k}$. In other words, $\sigma_k^*(n)$ is the sum of the $k^{th}$ powers of the unitary divisors of $n$, where a unitary divisor of $n$ is simply a positive divisor $c$ of $n$ such that $c$ and $\displaystyle{\frac{n}{c}}$ are relatively prime. The author has invented and investigated analogues of the divisor functions in imaginary quadratic integer rings that are unique factorization domains \cite{Defant14}. Here, we seek to investigate analogues of the unitary divisor functions in these rings. 
\par 
For any square-free integer $d$, let $\mathcal O_{\mathbb{Q}(\sqrt{d})}$ be the quadratic integer ring given by \[\mathcal O_{\mathbb{Q}(\sqrt{d})}=\begin{cases} \mathbb{Z}[\frac{1+\sqrt{d}}{2}], & \mbox{if } d\equiv 1\imod{4}; \\ \mathbb{Z}[\sqrt{d}], & \mbox{if } d\equiv 2, 3 \imod{4}. \end{cases}\] 
\par 
Throughout the remainder of this paper, we will work in the rings $\mathcal O_{\mathbb{Q}(\sqrt{d})}$ for different specific or arbitrary values of $d$. We will use the symbol ``$\vert$" to mean ``divides" in the ring $\mathcal O_{\mathbb{Q}(\sqrt{d})}$ in which we are working. 
Whenever we are working in a ring other than $\mathbb{Z}$, we will make sure to emphasize when we wish to state that one integer divides another in $\mathbb{Z}$. 
For example, if we are working in $\mathbb{Z}[i]$, the ring of Gaussian integers, we might say that $1+i\vert 1+3i$ and that $2\vert 6$ in $\mathbb{Z}$. We will also refer to primes in $\mathcal O_{\mathbb{Q}(\sqrt{d})}$ as ``primes," whereas we will refer to (positive) primes in $\mathbb{Z}$ as ``integer primes." For an integer prime $p$ and a nonzero integer $n$, we will let $\upsilon_p(n)$ denote the largest integer $k$ such that $p^k\vert n$ in $\mathbb{Z}$. For a prime $\pi$ and a nonzero number $x\!\in\!\mathcal O_{\mathbb{Q}(\sqrt{d})}$, we will let $\rho_\pi(x)$ denote the largest integer $k$ such that $\pi^k\vert x$.  
Furthermore, we will henceforth focus exclusively on values of $d$ for which $\mathcal O_{\mathbb{Q}(\sqrt{d})}$ is a unique factorization domain and $d<0$. In other words, $d\in K$, where we will define $K$ to be the set $\{-163,-67,-43,-19,-11,-7,-3,-2,-1\}$. The set $K$ is known to be the complete set of negative values of $d$ for which $\mathcal O_{\mathbb{Q}(\sqrt{d})}$ is a unique factorization domain \cite{Stark67}.  
\par 
For an element $a+b\sqrt{d}\in\mathcal O_{\mathbb{Q}(\sqrt{d})}$ with $a,b\in \mathbb{Q}$, we define the conjugate by $\overline{a+b\sqrt{d}}=a-b\sqrt{d}$. The norm and absolute value of an element $z$ are defined, respectively, by $N(z)=z\overline{z}$ and $\vert z\vert=\sqrt{N(z)}$. We assume familiarity with the properties of these object, which are treated in Keith Conrad's online notes \cite{Conrad}. For $x,y\in\mathcal O_{\mathbb{Q}(\sqrt{d})}$, we say that $x$ and $y$ are associated, denoted $x\sim y$, if and only if $x=uy$ for some unit $u$ in the ring $\mathcal O_{\mathbb{Q}(\sqrt{d})}$. Furthermore, we will make repeated use of the following well-known facts. 
\begin{fact} \label{Fact1.1}
Let $d\!\in\! K$. If $p$ is an integer prime, then exactly one of the following is true. 
\begin{itemize}
\item $p$ is also a prime in $\mathcal O_{\mathbb{Q}(\sqrt{d})}$. In this case, we say that $p$ is inert in $\mathcal O_{\mathbb{Q}(\sqrt{d})}$. 
\item $p\sim \pi^2$ and $\pi\sim\overline{\pi}$ for some prime $\pi\in \mathcal O_{\mathbb{Q}(\sqrt{d})}$. In this case, we say $p$ ramifies (or $p$ is ramified) in $\mathcal O_{\mathbb{Q}(\sqrt{d})}$. 
\item $p=\pi\overline{\pi}$ and $\pi\not\sim\overline{\pi}$ for some prime $\pi\in\mathcal O_{\mathbb{Q}(\sqrt{d})}$. In this case, we say $p$ splits (or $p$ is split) in $\mathcal O_{\mathbb{Q}(\sqrt{d})}$.
\end{itemize}
\end{fact}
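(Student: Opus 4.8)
The plan is to exploit the multiplicativity of the norm together with the fact that $\mathcal{O}_{\mathbb{Q}(\sqrt{d})}$ is a unique factorization domain. First I would record that, since $p$ is a rational integer, conjugation fixes it, so $N(p)=p\overline{p}=p^2$. Because $d<0$, the norm takes nonnegative integer values on $\mathcal{O}_{\mathbb{Q}(\sqrt{d})}$ and an element is a unit precisely when its norm equals $1$; in particular every prime $\pi$ satisfies $N(\pi)>1$.

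Next I would factor $p$ into primes. Writing $p\sim\pi_1\cdots\pi_k$ with each $\pi_i$ prime and taking norms gives $p^2=N(\pi_1)\cdots N(\pi_k)$. Each factor $N(\pi_i)$ is an integer exceeding $1$ and dividing $p^2$, hence lies in $\{p,p^2\}$ and is therefore at least $p\geq 2$. Since a product of $k$ factors each at least $p$ cannot equal $p^2$ once $k\geq 3$, this forces $k\leq 2$, leaving only the possibilities $k=1$ and $k=2$.

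If $k=1$, then $p\sim\pi_1$ is itself prime, which is the inert case. If $k=2$, then $N(\pi_1)=N(\pi_2)=p$ and $p\sim\pi_1\pi_2$. The crucial step is to identify the second factor with a conjugate of the first: from $\pi_1\overline{\pi_1}=N(\pi_1)=p\sim\pi_1\pi_2$, cancellation in the UFD yields $\overline{\pi_1}\sim\pi_2$. I would then split on whether $\pi_1\sim\overline{\pi_1}$. If so, then $\pi_1\sim\pi_2$, whence $p\sim\pi_1^2$ with $\pi_1\sim\overline{\pi_1}$, the ramified case with $\pi=\pi_1$. If not, then since $p=N(\pi_1)=\pi_1\overline{\pi_1}$ holds with equality, I directly reach the split case with $\pi=\pi_1$ and $\pi_1\not\sim\overline{\pi_1}$.

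Finally, mutual exclusivity is immediate from unique factorization: the inert case is the only one in which $p$ is prime, while the ramified and split cases are distinguished by whether the two conjugate prime factors of $p$ are associate, a condition that cannot hold both ways. I expect the only real subtlety to be the passage $\overline{\pi_1}\sim\pi_2$ in the case $k=2$, where I must invoke unique factorization rather than merely compare norms, together with the preliminary verification that the norm detects units so that the inequalities $N(\pi_i)>1$ are justified.
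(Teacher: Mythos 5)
Your proof is correct, but there is nothing in the paper to compare it against: the paper states Fact \ref{Fact1.1} without proof, as one of several ``well-known facts,'' deferring to the treatment of quadratic rings in Conrad's notes \cite{Conrad}. What you have written is the standard self-contained justification, and it is sound. The key steps all check out: taking norms in $p\sim\pi_1\cdots\pi_k$ gives $p^2=N(\pi_1)\cdots N(\pi_k)$ with each $N(\pi_i)\in\{p,p^2\}$, forcing $k\leq 2$; the identification $\overline{\pi_1}\sim\pi_2$ follows from cancelling $\pi_1$ in $\pi_1\overline{\pi_1}=N(\pi_1)=p\sim\pi_1\pi_2$ (cancellation needs only that the ring is an integral domain, though unique factorization is genuinely needed for the mutual exclusivity of the ramified and split cases, as you say); and your preliminary observations---that the norm is a nonnegative integer detecting units, so $N(\pi)>1$ for every prime---are valid precisely because $d<0$. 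Two details you handle correctly that are worth keeping explicit: first, in the split case the statement demands the exact equality $p=\pi\overline{\pi}$, not mere association, and your argument delivers it because $p=N(\pi_1)=\pi_1\overline{\pi_1}$ holds on the nose; second, exclusivity of inert versus the other two cases follows since a prime element cannot be written as a product of two non-units, while ramified versus split is decided by the associate-or-not dichotomy applied to a pair of prime factors that unique factorization shows is determined up to associates.
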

\begin{fact} \label{Fact1.2}
Let $d\!\in\! K$. If $\pi\!\in\!\mathcal O_{\mathbb{Q}(\sqrt{d})}$ is a prime, then exactly one of the following is true. 
\begin{itemize}
\item $\pi\sim q$ and $N(\pi)=q^2$ for some inert integer prime $q$. 
\item $\pi\sim\overline{\pi}$ and $N(\pi)=p$ for some ramified integer prime $p$. 
\item $\pi\not\sim\overline{\pi}$ and $N(\pi)=N(\overline{\pi})=p$ for some split integer prime $p$. 
\end{itemize}
\end{fact}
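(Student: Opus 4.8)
The plan is to reduce Fact \ref{Fact1.2} to Fact \ref{Fact1.1} by locating, for a given prime $\pi\in\mathcal O_{\mathbb{Q}(\sqrt{d})}$, an integer prime that $\pi$ divides, and then reading off which of the three cases occurs. First I would observe that since $\pi$ is prime it is not a unit, so $N(\pi)=\pi\overline{\pi}$ is an integer with $N(\pi)>1$ (because $d<0$, norms of nonzero elements are positive integers, and only units have norm $1$). Factoring $N(\pi)$ into integer primes in $\mathbb{Z}$ and using that $\pi\vert N(\pi)$, the fact that $\mathcal O_{\mathbb{Q}(\sqrt{d})}$ is a unique factorization domain and $\pi$ is prime forces $\pi$ to divide one of these integer prime factors, say $p$. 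This $p$ is the integer prime to which I would then apply Fact \ref{Fact1.1}.

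With $p$ in hand, I would split into the three cases of Fact \ref{Fact1.1} and track the norm and the associate relation in each. If $p$ is inert, then $p$ is itself prime, so $\pi\vert p$ together with irreducibility gives $\pi\sim p$, whence $N(\pi)=N(p)=p^2$; this is the first conclusion with $q=p$. If $p$ is ramified, write $p\sim\pi_0^2$ with $\pi_0\sim\overline{\pi_0}$; since $\pi\vert p\sim\pi_0^2$ and $\pi$ is prime, we get $\pi\sim\pi_0$, so $\pi\sim\overline{\pi}$, and multiplicativity of the norm applied to $p\sim\pi_0^2$ gives $N(\pi_0)^2=p^2$, hence $N(\pi)=N(\pi_0)=p$, the second conclusion. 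If $p$ splits, write $p=\pi_1\overline{\pi_1}$ with $\pi_1\not\sim\overline{\pi_1}$; then $\pi\vert\pi_1\overline{\pi_1}$ forces $\pi\sim\pi_1$ or $\pi\sim\overline{\pi_1}$, either of which yields $N(\pi)=p$ and, upon conjugating the associate relation, $\pi\not\sim\overline{\pi}$, the third conclusion.

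Finally I would verify that the three outcomes are mutually exclusive, so that \emph{exactly} one holds. The norm separates the first case from the other two, since in the inert case $N(\pi)$ is the square of an integer prime whereas in the ramified and split cases $N(\pi)$ is an integer prime. The ramified and split cases are in turn distinguished by the associate condition, as $\pi\sim\overline{\pi}$ and $\pi\not\sim\overline{\pi}$ cannot hold simultaneously.

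I expect the main obstacle to be the bookkeeping with associates rather than any deep idea: one must check that $\pi\vert p$ combined with the factorization of $p$ supplied by Fact \ref{Fact1.1} pins $\pi$ down up to a unit in exactly the claimed way, and that conjugation interacts correctly with the associate relation (for instance, that $x\sim y$ implies $\overline{x}\sim\overline{y}$, which holds because the conjugate of a unit is again a unit). Once these routine points are secured, the case analysis and the norm computations close the argument.
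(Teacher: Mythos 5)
Your proof is correct, but note that there is nothing in the paper to compare it against: the paper states this result as one of several ``well-known facts'' imported without proof from Conrad's notes on factoring in quadratic fields, so the proposal is supplying an argument the paper deliberately omits. Your argument is the standard one and it is complete: you first produce an integer prime $p$ with $\pi\vert p$ (via $\pi\vert N(\pi)$ and primality of $\pi$ applied to the integer factorization of $N(\pi)$), then run the trichotomy of Fact \ref{Fact1.1} on that $p$, and finally separate the three outcomes using the norm (a prime versus the square of a prime) together with the dichotomy $\pi\sim\overline{\pi}$ or $\pi\not\sim\overline{\pi}$. The bookkeeping you flagged as the main risk is handled correctly: conjugation preserves the associate relation because $\overline{u}$ is a unit whenever $u$ is, and the norm computations such as $N(\pi)=N(u\pi_0)=N(\pi_0)$ are valid here because units have norm $1$ when $d<0$ (a point worth stating explicitly, since for real quadratic fields units of norm $-1$ exist and the same manipulations would need absolute values). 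One tiny economy: you do not need the full UFD hypothesis to get $\pi$ dividing one of the integer prime factors of $N(\pi)$; primality of $\pi$ alone gives this, though in these rings the two notions coincide, so nothing is at stake.
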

\begin{fact} \label{Fact1.3}
Let $p$ be an odd integer prime. Then $p$ ramifies in $\mathcal O_{\mathbb{Q}(\sqrt{d})}$ if and only if $p\vert d$ in $\mathbb{Z}$. If $p\nmid d$ in $\mathbb{Z}$, then $p$ splits in $\mathcal O_{\mathbb{Q}(\sqrt{d})}$ if and only if $d$ is a quadratic residue modulo $p$. Note that this implies that $p$ is inert in $\mathcal O_{\mathbb{Q}(\sqrt{d})}$ if and only if $p\nmid d$ in $\mathbb{Z}$ and $d$ is a quadratic nonresidue modulo $p$. 
Also, the integer prime $2$ ramifies in $\mathcal O_{\mathbb{Q}(\sqrt{-1})}$ and $\mathcal O_{\mathbb{Q}(\sqrt{-2})}$, splits in $\mathcal O_{\mathbb{Q}(\sqrt{-7})}$, and is inert in $\mathcal O_{\mathbb{Q}(\sqrt{d})}$ for all $d\in K\backslash\{-1,-2,-7\}$.
\end{fact}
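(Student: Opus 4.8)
The plan is to reduce the entire classification to the factorization of a single quadratic polynomial modulo $p$. In each of our rings we have $\mathcal O_{\mathbb{Q}(\sqrt{d})}=\mathbb{Z}[\theta]$, where $\theta=\sqrt{d}$ with minimal polynomial $f(x)=x^2-d$ when $d\equiv 2,3\pmod{4}$, and $\theta=\frac{1+\sqrt d}{2}$ with minimal polynomial $f(x)=x^2-x+\frac{1-d}{4}$ when $d\equiv 1\pmod{4}$. Because $\theta$ generates the \emph{full} ring of integers, the index $[\mathcal O_{\mathbb{Q}(\sqrt{d})}:\mathbb{Z}[\theta]]$ equals $1$, so there is no conductor obstruction and the Dedekind--Kummer theorem applies to \emph{every} integer prime $p$: the isomorphism $\mathcal O_{\mathbb{Q}(\sqrt{d})}/(p)\cong\mathbb{F}_p[x]/(\overline f)$ shows that the way $p$ factors matches the way $\overline f$ factors in $\mathbb{F}_p[x]$. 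Since $d\in K$ forces $\mathcal O_{\mathbb{Q}(\sqrt{d})}$ to be a PID, each prime ideal is generated by a prime element, and conjugation of ideals corresponds to conjugation of generators, so the three possibilities for $\overline f$ translate directly into the language of Facts \ref{Fact1.1} and \ref{Fact1.2}: $\overline f$ is a square of a linear factor $\Leftrightarrow$ $p\sim\pi^2$ (ramified); $\overline f$ is a product of two \emph{distinct} linear factors $\Leftrightarrow$ $p=\pi\overline\pi$ with $\pi\not\sim\overline\pi$ (split); $\overline f$ is irreducible $\Leftrightarrow$ $p$ stays prime (inert).

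Next I would treat the odd primes $p$ by completing the square, which is legitimate precisely because $2$ is invertible in $\mathbb{F}_p$. In the case $d\equiv 2,3\pmod 4$ the roots of $\overline f$ are $\pm\sqrt d$, and in the case $d\equiv 1\pmod 4$ the quadratic formula gives roots $\frac{1\pm\sqrt d}{2}$; in both cases the roots lie in $\mathbb{F}_p$ if and only if $d$ is a quadratic residue modulo $p$, and the two roots coincide if and only if $p\mid d$. Equivalently, the discriminant of $f$ is $4d$ or $d$ respectively, and for odd $p$ both its divisibility by $p$ and its quadratic character agree with those of $d$ because $4$ is a square. Reading off the three cases then yields exactly the stated trichotomy: $p$ ramifies $\Leftrightarrow p\mid d$; if $p\nmid d$, then $p$ splits $\Leftrightarrow d$ is a quadratic residue mod $p$ and is inert $\Leftrightarrow d$ is a quadratic nonresidue mod $p$.

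Finally I would dispose of $p=2$ by a direct finite computation over the nine values $d\in K$, since completing the square is unavailable when $p=2$ and this is exactly the prime at which the split/ramified dichotomy can behave differently. Reducing $\overline f$ modulo $2$: for $d=-1$ one gets $x^2+1\equiv(x+1)^2$ and for $d=-2$ one gets $x^2+2\equiv x^2$, so $2$ ramifies in $\mathcal O_{\mathbb{Q}(\sqrt{-1})}$ and $\mathcal O_{\mathbb{Q}(\sqrt{-2})}$; for $d=-7$ one gets $x^2-x+2\equiv x(x+1)$, a product of distinct linear factors, so $2$ splits; and for each remaining $d\in K\setminus\{-1,-2,-7\}$ (all of which satisfy $d\equiv 1\pmod 4$) one computes $\overline f\equiv x^2+x+1$, which is irreducible over $\mathbb{F}_2$, so $2$ is inert. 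This reproduces the asserted list.

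The main obstacle, and the only genuinely delicate point, is the bookkeeping that passes from the ideal-level factorization of $(p)$ to the element-level statements of Facts \ref{Fact1.1} and \ref{Fact1.2}: one must use the PID hypothesis to replace prime ideals by prime elements and check that ideal conjugation corresponds to the conjugation $\pi\mapsto\overline\pi$, and one must verify that ``two distinct roots'' really produces non-associate conjugate primes (split) rather than associates (ramified). This separation of distinct from repeated roots is exactly where the invertibility of $2$ is used, which is why $p=2$ must be handled by the explicit case check above rather than by the general odd-prime argument.
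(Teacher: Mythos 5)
Your proposal is correct, but there is nothing in the paper to compare it against: Fact \ref{Fact1.3} is stated without proof, as one of several ``well-known facts'' whose justification the paper defers to the standard literature (e.g., Conrad's notes cited in the introduction). Your Dedekind--Kummer argument is a complete and essentially standard proof of this background fact. The key points all check out: monogenicity of $\mathcal O_{\mathbb{Q}(\sqrt{d})}$ removes any conductor obstruction, so the factorization of $(p)$ mirrors that of $x^2-d$ or $x^2-x+\frac{1-d}{4}$ modulo $p$; the hypothesis $d\in K$ makes the ring a PID, which is exactly what lets you pass from prime ideals to the prime-element language of Facts \ref{Fact1.1} and \ref{Fact1.2}; completing the square disposes of odd $p$; and your finite check at $p=2$ is right --- every $d\in K\backslash\{-1,-2,-7\}$ satisfies $d\equiv 5\pmod{8}$, so $\frac{1-d}{4}$ is odd and the reduction $x^2+x+1$ is irreducible over $\mathbb{F}_2$, while $d=-7\equiv 1\pmod 8$ gives the split factorization $x(x+1)$. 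The step you single out as delicate is handled correctly, though it could be made more explicit: in the split case, writing one prime ideal above $p$ as $(\pi)$, the norm computation $N(\pi)=p$ (with no sign ambiguity, since $d<0$ makes norms positive) gives the exact equation $p=\pi\overline{\pi}$ rather than merely an equation up to units, and distinctness of the two ideals gives $\pi\not\sim\overline{\pi}$; in the ramified case, uniqueness of ideal factorization forces the conjugate ideal to coincide with $(\pi)$, whence $\pi\sim\overline{\pi}$.
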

\begin{fact} \label{Fact1.4}
Let $\mathcal O_{\mathbb{Q}(\sqrt{d})}^*$ be the set of units in the ring $\mathcal O_{\mathbb{Q}(\sqrt{d})}$. Then $\mathcal O_{\mathbb{Q}(\sqrt{-1})}^*=\{\pm 1,\pm i\}$, $\displaystyle{\mathcal O_{\mathbb{Q}(\sqrt{-3})}^*=\left\{\pm 1,\pm \frac{1+\sqrt{-3}}{2},\pm \frac{1-\sqrt{-3}}{2}\right\}}$, and $\mathcal O_{\mathbb{Q}(\sqrt{d})}^*=\{\pm 1\}$ \\ whenever $d\in K\backslash\{-1,-3\}$. 
\end{fact}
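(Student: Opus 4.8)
The plan is to reduce the determination of the unit group to a finite Diophantine problem via the norm. The key observation is that for $d \in K$ (so that $d < 0$) the norm $N$ takes nonnegative integer values on $\mathcal O_{\mathbb{Q}(\sqrt{d})}$, and an element $z$ is a unit if and only if $N(z) = 1$. First I would establish this characterization: if $z$ is a unit with inverse $w$, then multiplicativity of the norm gives $N(z)N(w) = N(1) = 1$, and since both factors are positive integers we must have $N(z) = 1$. Conversely, if $N(z) = z\overline{z} = 1$, then $\overline{z}$ is a two-sided multiplicative inverse of $z$, so $z$ is a unit. Thus it suffices to enumerate the solutions of $N(z) = 1$ in each ring.

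Next I would split into the two cases appearing in the definition of $\mathcal O_{\mathbb{Q}(\sqrt{d})}$. When $d \equiv 2, 3 \imod{4}$ — which within $K$ occurs exactly for $d = -2$ and $d = -1$ — a general element is $a + b\sqrt{d}$ with $a, b \in \mathbb{Z}$, and $N(a + b\sqrt{d}) = a^2 + \vert d\vert b^2$. The equation $a^2 + \vert d\vert b^2 = 1$ then forces $b = 0$ and $a = \pm 1$ unless $\vert d\vert = 1$, that is, unless $d = -1$; in the latter case the additional solutions $a = 0,\ b = \pm 1$ appear, yielding $\pm i$. This recovers $\{\pm 1, \pm i\}$ for $d = -1$ and $\{\pm 1\}$ for $d = -2$.

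When $d \equiv 1 \imod{4}$ — which covers the remaining seven elements of $K$, including $d = -3$ — a general element has the form $\frac{a + b\sqrt{d}}{2}$ with $a, b \in \mathbb{Z}$ of the same parity, and $N\!\left(\frac{a + b\sqrt{d}}{2}\right) = \frac{a^2 + \vert d\vert b^2}{4}$. The unit condition becomes $a^2 + \vert d\vert b^2 = 4$. If $b = 0$ this gives $a = \pm 2$ and hence the units $\pm 1$. If $b \neq 0$ then $\vert d\vert b^2 \geq \vert d\vert$, so a solution requires $\vert d\vert \leq 4$, which among the relevant values forces $d = -3$; there $a^2 + 3b^2 = 4$ has the extra solutions $a = \pm 1,\ b = \pm 1$, producing the four units $\pm\frac{1 \pm \sqrt{-3}}{2}$. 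For every other $d \in K \setminus \{-1, -3\}$ we have $\vert d\vert \geq 7 > 4$, so $b = 0$ is forced and only $\pm 1$ survive.

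The argument is essentially a bounded search, so there is no deep obstacle; the only point requiring genuine care is the parity constraint $a \equiv b \imod{2}$ in the $d \equiv 1 \imod{4}$ case, which one must check to ensure that the elements enumerated by the equation $a^2 + \vert d\vert b^2 = 4$ actually lie in the ring. I would verify this directly for each listed solution (for instance $a = b = \pm 1$ for $d = -3$, where both coordinates are odd, and $a = \pm 2,\ b = 0$, where both are even), confirming that every solution found is an admissible ring element and that no admissible unit is omitted.
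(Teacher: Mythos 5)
Your proposal is correct. Note, however, that the paper offers no proof of this statement at all: Fact \ref{Fact1.4} is listed among several ``well-known facts'' that the author uses freely, with the underlying theory delegated to cited references (e.g., Conrad's notes on quadratic fields). So there is no in-paper argument to compare against, and your contribution is to supply the standard proof that the paper omits: characterize units as the elements of norm $1$, then solve the resulting Diophantine equation $a^2+\vert d\vert b^2=1$ (respectively $a^2+\vert d\vert b^2=4$ in the half-integer case), which is a finite check because $d<0$ makes the norm positive definite. Your case analysis is complete and matches the three outcomes in the statement, including the correct identification of which elements of $K$ fall under each congruence class modulo $4$ and the parity verification for the ring $\mathbb{Z}[\frac{1+\sqrt{d}}{2}]$. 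The only step you use implicitly and could make explicit in one line is that the norm is integer-valued on all of $\mathcal O_{\mathbb{Q}(\sqrt{d})}$ when $d\equiv 1\pmod{4}$: for an element $\frac{a+b\sqrt{d}}{2}$ with $a\equiv b\pmod{2}$, one needs $a^2-db^2\equiv 0\pmod{4}$, which holds because either $a,b$ are both even, or both are odd and then $a^2-db^2\equiv 1-d\equiv 0\pmod{4}$. With that observation inserted, your reduction ``unit if and only if $N(z)=1$'' and the subsequent bounded search constitute a complete and self-contained proof.
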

\par 
For a nonzero complex number $z$, let $\arg (z)$ denote the argument, or angle, of $z$. We convene to write $\arg (z)\in [0,2\pi)$ for all nonzero $z\in\mathbb{C}$. For each $d\in K$, we define the set $A(d)$ by 
\[A(d)=\begin{cases} \{z\in\mathcal O_{\mathbb{Q}(\sqrt{d})} \backslash\{0\}: 0\leq \arg (z)<\frac{\pi}
{2}\}, & \mbox{if } d=-1; \\ \{z\in\mathcal O_{\mathbb{Q}(\sqrt{d})} \backslash\{0\}: 0\leq \arg (z)<\frac{\pi}
{3}\}, & \mbox{if } d=-3; \\ \{z\in\mathcal O_{\mathbb{Q}(\sqrt{d})} \backslash\{0\}: 0\leq \arg (z)<\pi\}, & \mbox{otherwise}. \end{cases}\] 
Thus, every nonzero element of $\mathcal O_{\mathbb{Q}(\sqrt{d})}$ can be written uniquely as a unit times a product of primes in $A(d)$. Also, every $z\in\mathcal O_{\mathbb{Q}(\sqrt{d})}\backslash\{0\}$ is associated to a unique element of $A(d)$. 
For nonzero elements $x,z\in\mathcal O_{\mathbb{Q}(\sqrt{d})}$, we will write $x\Diamond z$ if and only if $x\in A(d)$, $x\vert z$, and $x$ is relatively prime to $\displaystyle{\frac{z}{x}}$ (meaning $x$ and $\displaystyle{\frac{z}{x}}$ have no nonunit common divisors).  
\begin{definition} \label{Def1.1}
Let $d\in K$, and let $n\in\mathbb{Z}$.  Define the function \\ 
$\delta_n^*\colon\mathcal O_{\mathbb{Q}(\sqrt{d})}\backslash\{0\}\rightarrow [1,\infty)$ by 
\[\delta_n^*(z)=\sum_{x\Diamond z}\vert x \vert^n.\]
\end{definition}
\begin{remark} \label{Rem1.1}
We note that, for each $x$ in the summation in the above definition, we may cavalierly replace $x$ with one of its associates. This is because associated numbers have the same absolute value. In other words, the only reason for the criterion $x\!\in\! A (d)$ in the summation that appears in Definition \ref{Def1.1} (which is implied by the relation $x\Diamond z$) is to forbid us from counting associated divisors as distinct terms in the summation, but we may choose to use any of the associated divisors as long as we only choose one. This should not be confused with how we count conjugate divisors (we treat $2+i$ and $2-i$ as distinct divisors of $5$ in $\mathbb{Z}[i]$ because $2+i\not\sim 2-i$). Also, note that the functions $\delta_n^*$ depend on the ring in which we are working (this is also true of the function $I_n^*$, which we will soon define).  
\end{remark}
\par 
We will say that a function $f\colon\mathcal O_{\mathbb{Q}(\sqrt{d})}\backslash\{0\}\!\rightarrow\!\mathbb{R}$ is multiplicative if $f(xy)=f(x)f(y)$ whenever $x$ and $y$ are relatively prime. 
\begin{theorem} \label{Thm1.1}
Let us work in a ring $\mathcal O_{\mathbb{Q}(\sqrt{d})}$ with $d\in K$. For any $n\in\mathbb{Z}$, the function $\delta_n^*$ is multiplicative. 
\end{theorem}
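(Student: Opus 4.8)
The plan is to establish a bijection between the unitary divisors of $xy$ and the ordered pairs consisting of a unitary divisor of $x$ and a unitary divisor of $y$, and then to exploit the multiplicativity of the absolute value. Throughout, I would use Remark \ref{Rem1.1} to track each divisor appearing in a summation by its unique associate in $A(d)$, so that questions about the relation $\Diamond$ reduce to questions about equivalence classes under association.

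First I would record the fundamental characterization of $\Diamond$ in terms of prime factorizations: if $z\in\mathcal O_{\mathbb{Q}(\sqrt{d})}\backslash\{0\}$ factors (up to a unit) as $z\sim\prod_i \pi_i^{e_i}$ into distinct primes $\pi_i\in A(d)$ with each $e_i\geq 1$, then $w\Diamond z$ if and only if $w\sim\prod_{i\in S}\pi_i^{e_i}$ for some subset $S$ of the index set. Indeed, $w\vert z$ forces $0\leq\rho_{\pi_i}(w)\leq e_i$ for every $i$, while the requirement that $w$ be relatively prime to $\frac{z}{w}$ forces $\rho_{\pi_i}(w)\in\{0,e_i\}$ for every $i$. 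Thus the elements $w$ with $w\Diamond z$ are in one-to-one correspondence with the subsets $S$ of the set of maximal prime powers dividing $z$.

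Next, using that $x$ and $y$ are relatively prime, I would split the primes dividing $xy$ into those dividing $x$ and those dividing $y$; by unique factorization these two collections are disjoint, and $xy$ is associated to the product of the two corresponding partial products. A subset $S$ of the maximal prime powers of $xy$ therefore decomposes uniquely as $S=S_x\sqcup S_y$, which yields the map $w\mapsto(w_1,w_2)$, where $w_1$ collects the prime powers of $w$ coming from $x$ and $w_2$ those coming from $y$, with inverse $(w_1,w_2)\mapsto w_1w_2$. I would verify that this is a bijection between $\{w:w\Diamond xy\}$ and $\{w_1:w_1\Diamond x\}\times\{w_2:w_2\Diamond y\}$. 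The nontrivial direction is checking that $w_1w_2\Diamond xy$ whenever $w_1\Diamond x$ and $w_2\Diamond y$: here one uses the coprimality of $x$ and $y$ to conclude that $w_1w_2$ is relatively prime to $\frac{x}{w_1}\cdot\frac{y}{w_2}=\frac{xy}{w_1w_2}$, since $w_1$ is coprime to $\frac{x}{w_1}$, $w_2$ is coprime to $\frac{y}{w_2}$, and any prime dividing $x$ is coprime to any prime dividing $y$.

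Finally, since $\vert w_1w_2\vert=\vert w_1\vert\,\vert w_2\vert$, the sum factors:
\[
\delta_n^*(xy)=\sum_{w\Diamond xy}\vert w\vert^n=\sum_{w_1\Diamond x}\sum_{w_2\Diamond y}\vert w_1w_2\vert^n=\left(\sum_{w_1\Diamond x}\vert w_1\vert^n\right)\left(\sum_{w_2\Diamond y}\vert w_2\vert^n\right)=\delta_n^*(x)\,\delta_n^*(y).
\]
I expect the main obstacle to be the bookkeeping with associates and the set $A(d)$: the product $w_1w_2$ of two representatives drawn from $A(d)$ need not itself lie in $A(d)$, so the bijection is genuinely one between classes of associated elements rather than between literal elements of $A(d)$. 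The point of Remark \ref{Rem1.1} is precisely that replacing any summand by its associate in $A(d)$ leaves $\vert w\vert^n$ unchanged, so once this identification is made carefully the factorization of the double sum is routine.
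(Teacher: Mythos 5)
Your proposal is correct and follows essentially the same route as the paper: both establish a bijection (up to association) between the unitary divisors of $z_1z_2$ and pairs consisting of a unitary divisor of $z_1$ and one of $z_2$, then factor the sum using $\vert w_1w_2\vert=\vert w_1\vert\,\vert w_2\vert$. The only difference is one of detail---you justify the bijection explicitly via the characterization of unitary divisors as products of maximal prime powers, and you handle the associate/$A(d)$ bookkeeping more carefully, whereas the paper asserts the correspondence directly from unique factorization.
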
 
\begin{proof} 
Let $z_1$ and $z_2$ be relatively prime elements of $\mathcal O_{\mathbb{Q}(\sqrt{d})}\backslash\{0\}$ for some $d\in K$. First note that if $x\Diamond z_1z_2$, then $x\sim x_1x_2$, where $x_1\Diamond z_1$ and $x_2\Diamond z_2$. Furthermore, the numbers $x_1$ and $x_2$ are unique because of the requirement $x_1,x_2\in A(d)$ inherent in the relations $x_1\Diamond z_1$ and $x_2\Diamond z_2$. On the other hand, if $x_1\Diamond z_1$ and $x_2\Diamond z_2$, then $x_1x_2$ is associated to a unique number $x$ such that $x\Diamond z_1z_2$. Therefore, 
\[\delta_n^*(z_1z_2)=\sum_{x\Diamond z_1z_2}\vert x\vert^n=\sum_{\substack{x_1\Diamond z_1 \\ x_2\Diamond z_2}}\vert x_1x_2\vert^n=\sum_{x_1\Diamond z_1}\vert x_1\vert^n\sum_{x_2\Diamond z_2}\vert x_2\vert^n=\delta_n^*(z_1)\delta_n^*(z_2).\]
\end{proof} 
\begin{definition} \label{Def1.2} 
For $d\in K$, define the function $I_n^*\colon\mathcal O_{\mathbb{Q}(\sqrt{d})}\backslash\{0\}\rightarrow[1,\infty)$, for each $n\in\mathbb{Z}$, by $\displaystyle{I_n^*(z)=\frac{\delta_n^*(z)}{\vert z\vert ^n}}$. 
For a positive integer $t\geq 2$, we say that a number $z\in\mathcal O_{\mathbb{Q}(\sqrt{d})}\backslash\{0\}$ is \textit{$n$-powerfully unitarily $t$-perfect in $\mathcal O_{\mathbb{Q}(\sqrt{d})}$} if $I_n^*(z)=t$, and, if $t=2$, we simply say that $z$ is \textit{$n$-powerfully unitarily perfect in $\mathcal O_{\mathbb{Q}(\sqrt{d})}$}. If $n=1$, we will omit the adjective ``$1$-powerfully."   
\end{definition} 
\begin{theorem} \label{Thm1.2}
Let $k,n\in\mathbb{N}$, $d\in K$, and $z\in\mathcal O_{\mathbb{Q}(\sqrt{d})}\backslash\{0\}$. Then, if we are working in the ring $\mathcal O_{\mathbb{Q}(\sqrt{d})}$, the following statements are true. 
\begin{enumerate}[(a)] 
\item The range of $I_n^*$ is a subset of the interval $[1,\infty)$, and $I_n^*(z)=1$ if and only if $z$ is a unit in $\mathcal O_{\mathbb{Q}(\sqrt{d})}$. 
\item $I_n^*$ is multiplicative.  
\item $I_n^*(z)=\delta_{-n}^*(z)$. 
\end{enumerate}  	
\end{theorem}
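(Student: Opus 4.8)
The plan is to treat the three parts in sequence, using throughout that the absolute value is multiplicative on $\mathcal O_{\mathbb{Q}(\sqrt{d})}$ (so that $|xy|^n=|x|^n|y|^n$) and that $\delta_n^*$ is multiplicative by Theorem \ref{Thm1.1}. For part (a), I would first observe that among the terms of $\delta_n^*(z)=\sum_{x\Diamond z}|x|^n$ we always find the contribution $|z|^n$, coming from the unique element of $A(d)$ associated to $z$ (this is a unitary divisor of $z$, and by Remark \ref{Rem1.1} it has absolute value $|z|$), together with the contribution $1$, coming from the unitary divisor $1\in A(d)$. Since every term $|x|^n$ is positive, this already yields $\delta_n^*(z)\geq |z|^n$ and hence $I_n^*(z)=\delta_n^*(z)/|z|^n\geq 1$, establishing the range claim. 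For the equality criterion I would argue by cases: if $z$ is a unit then $z\sim 1$, and since $1$ is the only unit lying in $A(d)$ the only $x$ satisfying $x\Diamond z$ is $x=1$, so $\delta_n^*(z)=1=|z|^n$ and $I_n^*(z)=1$; if $z$ is not a unit then the $A(d)$-associate of $z$ differs from $1$, so $1$ and $|z|^n$ are two distinct positive terms, giving $\delta_n^*(z)\geq 1+|z|^n>|z|^n$ and therefore $I_n^*(z)>1$.

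Part (b) is then immediate. For relatively prime $x$ and $y$, combining Theorem \ref{Thm1.1} with the multiplicativity of the absolute value gives
\[I_n^*(xy)=\frac{\delta_n^*(xy)}{|xy|^n}=\frac{\delta_n^*(x)\,\delta_n^*(y)}{|x|^n\,|y|^n}=I_n^*(x)\,I_n^*(y).\]

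Part (c) is the heart of the matter, and the step I expect to demand the most care. The idea is to rewrite
\[I_n^*(z)=\frac{1}{|z|^n}\sum_{x\Diamond z}|x|^n=\sum_{x\Diamond z}\left(\frac{|x|}{|z|}\right)^n=\sum_{x\Diamond z}\left|\frac{z}{x}\right|^{-n},\]
and then to reindex through the complementation map sending each $x$ to the unique element of $A(d)$ associated to $z/x$. The obstacle is verifying that this map is a well-defined involution on the index set $\{x:x\Diamond z\}$: I would check that whenever $x\Diamond z$ the complement $z/x$ is again a unitary divisor of $z$ (its cofactor is $x$, which is coprime to it), that passing to the $A(d)$-associate preserves divisibility and coprimality since these relations are unaffected by unit multiples, and that the map squares to the identity because $x$ already lies in $A(d)$. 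Once this bijection is in place, writing $y$ for the $A(d)$-associate of $z/x$ we have $|z/x|=|y|$, so the displayed sum becomes $\sum_{y\Diamond z}|y|^{-n}=\delta_{-n}^*(z)$, which completes the proof.
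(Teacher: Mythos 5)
Your proposal is correct, and parts (a) and (b) match the paper (the paper dismisses (a) as ``fairly trivial'' and proves (b) exactly as you do). Where you genuinely diverge is part (c). The paper exploits multiplicativity: since $I_n^*$ is multiplicative by part (b) and $\delta_{-n}^*$ is multiplicative by Theorem \ref{Thm1.1} (which holds for all integer exponents, including $-n$), it suffices to verify the identity on a prime power $\pi^\alpha$, where it is a one-line computation because $\pi^\alpha$ has exactly two unitary divisors, namely $1$ and the $A(d)$-associate of $\pi^\alpha$: one gets $I_n^*(\pi^\alpha)=|\pi|^{-\alpha n}\left(1+|\pi^\alpha|^n\right)=1+|\pi^\alpha|^{-n}=\delta_{-n}^*(\pi^\alpha)$. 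You instead prove the identity directly for arbitrary $z$ via the complementary-divisor involution $x\mapsto$ ($A(d)$-associate of $z/x$), checking well-definedness, preservation of the unitary condition under passage to associates, and the involution property (which uses $x\in A(d)$); all of these checks are sound. Your route is the natural transplant of the classical proof that $\sigma_{-k}(n)=\sigma_k(n)/n^k$ in $\mathbb{Z}$, and it buys independence from the multiplicativity machinery --- it would survive even in settings where Theorem \ref{Thm1.1} were unavailable, and it makes the structural reason for the identity (divisor complementation) transparent. The paper's route buys brevity: given the multiplicativity results already in hand, the prime-power case is essentially a calculation, with no bijection bookkeeping required.
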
 
\begin{proof} 
Part $(a)$ is fairly trivial. To prove part $(b)$, let $z_1$ and $z_2$ be relatively prime elements of $\mathcal O_{\mathbb{Q}(\sqrt{d})}$. We may use Theorem \ref{Thm1.1} to write 
\[I_n^*(z_1z_2)=\frac{\delta_n^*(z_1z_2)}{\vert z_1z_2\vert^n}=\frac{\delta_n^*(z_1)\delta_n^*(z_2)}{\vert z_1\vert^n\vert z_2\vert^n}=I_n^*(z_1)I_n^*(z_2).\]
To prove part $(c)$, it suffices, due to the truth of part $(b)$, to show that $I_n^*(\pi^{\alpha})=\delta_{-n}^*(\pi^{\alpha})$ for an arbitrary prime $\pi$ and positive integer $\alpha$. We have 
\[I_n^*(\pi^{\alpha})=\frac{\delta_n^*(\pi^{\alpha})}{\vert \pi^{\alpha}\vert^n}=\vert \pi\vert^{-\alpha n}\sum_{x\Diamond\pi^{\alpha}}\vert x\vert^n=\vert \pi\vert^{-\alpha n}(1+\vert\pi^{\alpha}\vert^n)\]
\[=1+\vert\pi^{\alpha}\vert^{-n}=\sum_{x\Diamond\pi^{\alpha}}\vert x\vert^{-n}=\delta_{-n}^*(\pi^{\alpha}).\]
\end{proof} 
\begin{remark} \label{Rem1.2} 
Let $d\in K$, and let $z\in\mathcal O_{\mathbb{Q}(\sqrt{d})}\backslash\{0\}$ satisfy $\displaystyle{z\sim\prod_{j=1}^r\pi_j^{\alpha_j}}$, where, for all distinct $j,\ell\in\{1,2,\ldots,r\}$, $\pi_j$ is a prime, $\alpha_j$ is a positive integer, and $\pi_j\not\sim\pi_{\ell}$. Combining parts $(b)$ and $(c)$ of Theorem \ref{Thm1.2}, we see that, for any positive integer $n$, we may calculate $I_n^*(z)$ as $\displaystyle{I_n^*(z)=\prod_{j=1}^r(1+\vert\pi_j\vert^{-\alpha_j n})}$. 
\end{remark}
As an example, let us calculate $I_2^*(30)$ in $\mathcal O_{\mathbb{Q}(\sqrt{-1})}$. We have \[30\sim(1+i)^2\cdot 3(2+i)(2-i),\] so \[I_2^*(30)=I_2^*\left((1+i)^2\right)I_2^*(3)
I_2^*(2+i)I_2^*(2-i)\] 
\[=\left(1+\frac{1}{N(1+i)^2}\right)\left(1+\frac{1}{N(3)}\right)\left(1+\frac{1}{N(2+i)}\right)\left(1+\frac{1}{N(2-i)}\right)\] 
\[=\frac{5}{4}\cdot\frac{10}{9}\cdot\frac{6}{5}\cdot\frac{6}{5}=2.\] Thus, $30$ is $2$-powerfully unitarily perfect in $\mathcal O_{\mathbb{Q}(\sqrt{-1})}$.  
\par 
Now that we have established the foundations that we will need, we may study the properties of some $n$-powerfully unitarily $t$-perfect numbers. 
\section{Investigating $n$-powerfully Unitarily \\ $t$-perfect Numbers} 
\begin{theorem} \label{Thm2.1} 
Let $d\in K$, and let $z\in\mathcal O_{\mathbb{Q}(\sqrt{d})}\backslash\{0\}$. For any integer $n\geq 4$, $I_n^*(z)<2$. Furthermore, if $I_3^*(z)$ is rational, then $I_3^*(z)<2$. 
\end{theorem}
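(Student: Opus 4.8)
The plan is to pass to the multiplicative formula of Remark \ref{Rem1.2} and then bound the resulting product. Writing $z\sim\prod_{j=1}^r\pi_j^{\alpha_j}$ with the $\pi_j$ pairwise non-associated primes, Remark \ref{Rem1.2} gives $I_n^*(z)=\prod_{j=1}^r\left(1+N(\pi_j)^{-\alpha_j n/2}\right)$, since $\vert\pi_j\vert^2=N(\pi_j)$. Because $\alpha_j\geq1$, each factor is at most $1+N(\pi_j)^{-n/2}$, and by Facts \ref{Fact1.1} and \ref{Fact1.2} there are at most two non-associated primes lying over any single integer prime $p$ (two of norm $p$ when $p$ splits, one of norm $p$ when $p$ ramifies, one of norm $p^2$ when $p$ is inert). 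Grouping the factors according to the integer prime beneath each $\pi_j$ and using $1+p^{-n}\leq(1+p^{-n/2})^2$, I would obtain the crude uniform bound
\[I_n^*(z)<\prod_{p}\left(1+p^{-n/2}\right)^2=\left(\frac{\zeta(n/2)}{\zeta(n)}\right)^2,\]
the product running over all integer primes, where I have invoked the Euler-product identity $\prod_p(1+p^{-s})=\zeta(s)/\zeta(2s)$.

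For $n\geq5$ this already finishes the argument: $\zeta(n/2)/\zeta(n)$ decreases in $n$ and at $n=5$ equals $\zeta(5/2)/\zeta(5)<\sqrt2$, so the right-hand side stays below $2$. The trouble is that at $n=4$ the same bound yields only $(\zeta(2)/\zeta(4))^2=(15/\pi^2)^2\approx2.31$, which is useless. For $n=4$ I would therefore retain the exact local behaviour rather than pass to the per-prime overestimate, writing $I_4^*(z)<\prod_{\pi}\left(1+N(\pi)^{-2}\right)=\zeta_K(2)/\zeta_K(4)$, where $\zeta_K$ is the Dedekind zeta function of $\mathbb{Q}(\sqrt d)$, the product being taken over one prime from each $A(d)$-class (it converges, and the quotient comes from $1+N(\pi)^{-2}=(1-N(\pi)^{-4})/(1-N(\pi)^{-2})$). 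It then remains to check $\zeta_K(2)/\zeta_K(4)<2$ in each of the nine rings. The tightest ring appears to be $d=-7$, the only one in which $2$ splits; there I would estimate the product by computing the contribution of the finitely many primes of small norm explicitly and controlling the remainder by $\prod_{N(\pi)>M}(1+N(\pi)^{-2})\leq\exp\left(\sum_{N(\pi)>M}N(\pi)^{-2}\right)$, a convergent integer-prime tail.

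For the $n=3$ statement the rationality hypothesis is essential, and I would use it to force the exponents upward. In the factor $1+N(\pi_j)^{-3\alpha_j/2}$ an inert prime ($N(\pi_j)=q^2$) always contributes a rational number, whereas a split or ramified prime ($N(\pi_j)=p$) contributes $1+p^{-3\alpha_j/2}$, which is rational if and only if $\alpha_j$ is even. Collecting the factors lying over each integer prime $p$ into a single element $F_p\in\mathbb{Q}(\sqrt p)$, one has $I_3^*(z)=\prod_pF_p$ with the fields $\mathbb{Q}(\sqrt p)$ linearly disjoint; applying the automorphism $\sqrt p\mapsto-\sqrt p$ of the compositum shows that $I_3^*(z)\in\mathbb{Q}$ forces each $F_p\in\mathbb{Q}$, and hence (inspecting the one- and two-factor products) forces every split or ramified prime to occur with even exponent. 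Then every factor of $I_3^*(z)$ is at most $1+p^{-3}$, and the grouping of the first paragraph yields $I_3^*(z)<(\zeta(3)/\zeta(6))^2\approx1.40<2$.

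The main obstacle is plainly $n=4$: the clean global bound overshoots $2$, so I see no purely formal way around examining the nine rings individually, and even then the margin is thin—at $d=-7$ one has $\zeta_K(2)\approx1.90$, so the small-norm primes must be handled with some care before the tail estimate can close the gap. By contrast the cases $n\geq5$ and the rational case $n=3$ are comfortable, the latter precisely because rationality effectively upgrades the exponent $3$ to $6$.
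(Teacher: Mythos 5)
Your proposal is correct in substance and, for most of the theorem, follows the same skeleton as the paper: bound $I_n^*(z)$ by an Euler product over all primes of the ring, group primes by the integer prime beneath them, and convert to zeta values. For $n\geq 5$ your argument is the paper's argument verbatim (both reduce to $\left(\zeta(5/2)/\zeta(5)\right)^2<2$). For $n=3$ you are in fact \emph{more} rigorous than the paper: where the paper dismisses the evenness of $\rho_\pi(z)$ for primes with $\vert\pi\vert=\sqrt{p}$ as ``easy to see,'' you supply a real argument (grouping the factors into elements $F_p\in\mathbb{Q}(\sqrt{p})$ and applying the automorphisms of the multiquadratic compositum to force each $F_p$ rational); after that both proofs end with $\left(\zeta(3)/\zeta(6)\right)^2<2$. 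The genuine divergence is $n=4$, where the crude bound $\left(\zeta(2)/\zeta(4)\right)^2\approx 2.31$ fails for everyone. The paper's resolution is not per-ring numerics but a single well-chosen prime in each case: if $d\neq -7$ then $2$ does not split, so its factor is at most $1+\sqrt{2}^{-4}=\frac54$ rather than $\left(\frac54\right)^2$, giving $I_4^*(z)<\frac45\left(15/\pi^2\right)^2\approx 1.85$; if $d=-7$ then $3$ is inert, giving $I_4^*(z)<\frac{41}{50}\left(15/\pi^2\right)^2\approx 1.89$. This buys closed-form bounds from the Riemann zeta function alone, in two cases. Your route---retaining the exact product $\zeta_K(2)/\zeta_K(4)$ and checking it is below $2$ in each of the nine rings---is valid and conceptually cleaner (it uses the true local factors), but it is the one part of your proposal left unexecuted, and it trades two closed-form evaluations for nine numerical verifications; note also that for $d\neq -7$ your Dedekind-zeta bound, once you use that $2$ does not split, collapses back to exactly the paper's $\frac45\left(15/\pi^2\right)^2$. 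Finally, the margin you worry about at $d=-7$ is not actually thin: $\zeta_K(2)/\zeta_K(4)\approx 1.90/1.14\approx 1.67$, comfortably below $2$, so your tail-estimate plan would close without difficulty.
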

\begin{proof} 
Let $\Psi(z)$ be the set of all primes in $A(d)$ that divide $z$, and let $\Phi$ be the set of all primes in $A(d)$. Then, for any integer $n\geq 3$, 
\[I_n^*(z)=\prod_{\pi\in\Psi(z)}(1+\vert\pi\vert^{-\rho_{\pi}(z)n})<\prod_{\pi\in\Psi(z)}(1+\vert\pi\vert^{-n})<\prod_{\pi\in\Phi}(1+\vert\pi\vert^{-n})\]
\[=\prod_{\substack{\pi\in\Phi\\ \vert\pi\vert\in\mathbb{N}}}(1+\vert\pi\vert^{-n}) \prod_{\substack{\pi\in\Phi\\ \vert\pi\vert\not\in\mathbb{N} \\ \pi\sim\overline{\pi}}}(1+\vert\pi\vert^{-n}) \prod_{\substack{\pi\in\Phi\\ \vert\pi\vert\not\in\mathbb{N} \\ \pi\not\sim\overline{\pi}}}(1+\vert\pi\vert^{-n})\]
\[=\prod_{\substack{q\in\mathbb{P}\\ q\hspace{0.75 mm} is\hspace{0.75 mm} inert}}(1+q^{-n})
\prod_{\substack{p\in\mathbb{P}\\ p\hspace{0.75 mm} ramifies}}(1+\sqrt{p}^{-n})
\prod_{\substack{p\in\mathbb{P}\\ p\hspace{0.75 mm} splits}}(1+\sqrt{p}^{-n})^2.\]
If $n\geq 5$, then we have 
\[I_n^*(z)<\prod_{\substack{q\in\mathbb{P}\\ q\hspace{0.75 mm} is\hspace{0.75 mm} inert}}(1+q^{-n})
\prod_{\substack{p\in\mathbb{P}\\ p\hspace{0.75 mm} ramifies}}(1+\sqrt{p}^{-n})
\prod_{\substack{p\in\mathbb{P}\\ p\hspace{0.75 mm} splits}}(1+\sqrt{p}^{-n})^2\]
\[<\prod_{\substack{q\in\mathbb{P}\\ q\hspace{0.75 mm} is\hspace{0.75 mm} inert}}(1+\sqrt{q}^{-n})^2
\prod_{\substack{p\in\mathbb{P}\\ p\hspace{0.75 mm} ramifies}}(1+\sqrt{p}^{-n})^2
\prod_{\substack{p\in\mathbb{P}\\ p\hspace{0.75 mm} splits}}(1+\sqrt{p}^{-n})^2\]
\[=\prod_{p\in\mathbb{P}}(1+\sqrt{p}^{-n})^2\leq\prod_{p\in\mathbb{P}}\left(1+\sqrt{p}^{-5}\right)^2=\prod_{p\in\mathbb{P}}\left(\frac{1-p^{-5}}{1-\sqrt{p}^{-5}}\right)^2\] 
\[=\left(\frac{\zeta(5/2)}{\zeta(5)}\right)^2<2,\]
where $\zeta$ denotes the Riemann zeta function. 
\par 
Next, suppose $n=4$. Let us assume that $d\neq -7$ so that $2$ does not split in $\mathcal O_{\mathbb{Q}(\sqrt{d})}$. Then 
\[I_4^*(z)<\prod_{\substack{q\in\mathbb{P}\\ q\hspace{0.75 mm} is\hspace{0.75 mm} inert}}\left(1+q^{-4}\right)
\prod_{\substack{p\in\mathbb{P}\\ p\hspace{0.75 mm} ramifies}}\left(1+\sqrt{p}^{-4}\right)
\prod_{\substack{p\in\mathbb{P}\\ p\hspace{0.75 mm} splits}}\left(1+\sqrt{p}^{-4}\right)^2\]
\[<\prod_{\substack{p\in\mathbb{P}\\ p\hspace{0.75 mm} does \\ not\hspace{0.75 mm} split}}\left(1+\sqrt{p}^{-4}\right)
\prod_{\substack{p\in\mathbb{P}\\ p\hspace{0.75 mm} splits}}\left(1+\sqrt{p}^{-4}\right)^2\] 
\[=\left(1+\sqrt{2}^{-4}\right)\prod_{\substack{p\in\mathbb{P}\backslash\{2\}\\ p\hspace{0.75 mm} does \\ not\hspace{0.75 mm} split}}\left(1+\sqrt{p}^{-4}\right)
\prod_{\substack{p\in\mathbb{P}\\ p\hspace{0.75 mm} splits}}\left(1+\sqrt{p}^{-4}\right)^2\]
\[<\left(1+\sqrt{2}^{-4}\right)\prod_{p\in\mathbb{P}\backslash\{2\}}\left(1+\sqrt{p}^{-4}\right)^2=\left(1+\sqrt{2}^{-4}\right)^{-1}\prod_{p\in\mathbb{P}}\left(1+\sqrt{p}^{-4}\right)^2\]
\[=\frac{4}{5}\prod_{p\in\mathbb{P}}\left(1+p^{-2}\right)^2=\frac{4}{5}\prod_{p\in\mathbb{P}}\left(\frac{1-p^{-4}}{1-p^{-2}}\right)^2=\frac{4}{5}\left(\frac{\zeta(2)}{\zeta(4)}\right)^2<2.\]
Now, assume that $d=-7$ so that $3$ is inert. We then have  
\[I_4^*(z)<\prod_{\substack{q\in\mathbb{P}\\ q\hspace{0.75 mm} is\hspace{0.75 mm} inert}}\left(1+q^{-4}\right)
\prod_{\substack{p\in\mathbb{P}\\ p\hspace{0.75 mm} ramifies}}\left(1+\sqrt{p}^{-4}\right)
\prod_{\substack{p\in\mathbb{P}\\ p\hspace{0.75 mm} splits}}\left(1+\sqrt{p}^{-4}\right)^2\] 
\[<\left(1+3^{-4}\right)\prod_{p\in\mathbb{P}\backslash\{3\}}\left(1+\sqrt{p}^{-4}\right)^2=\frac{1+3^{-4}}{\left(1+\sqrt{3}^{-4}\right)^2}\prod_{p\in\mathbb{P}}\left(1+\sqrt{p}^{-4}\right)^2\]
\[=\frac{41}{50}\prod_{p\in\mathbb{P}}\left(\frac{1-p^{-4}}{1-p^{-2}}\right)^2=\frac{41}{50}\left(\frac{\zeta(2)}{\zeta(4)}\right)^2<2.\]
\par  
Finally, suppose $n=3$ and $I_3^*(z)$ is rational. If $\pi$ is a prime and $\vert\pi\vert=\sqrt{p}$ for some integer prime $p$, then it is easy to see that $\rho_{\pi}(z)$ must be even in order for $I_3^*(z)$ to be rational. Therefore, 
\[I_3^*(z)=\prod_{\pi\in\Psi(z)}\left(1+\vert\pi\vert^{-3\rho_{\pi}(z)}\right)\] 
\[<\prod_{\substack{q\in\mathbb{P}\\ q\hspace{0.75 mm} is\hspace{0.75 mm} inert}}\left(1+q^{-3}\right)
\prod_{\substack{p\in\mathbb{P}\\ p\hspace{0.75 mm} ramifies}}\left(1+\sqrt{p}^{-6}\right)
\prod_{\substack{p\in\mathbb{P}\\ p\hspace{0.75 mm} splits}}\left(1+\sqrt{p}^{-6}\right)^2\] 
\[<\prod_{p\in\mathbb{P}}\left(1+p^{-3}\right)^2=\prod_{p\in\mathbb{P}}\left(\frac{1-p^{-6}}{1-p^{-3}}\right)^2=\left(\frac{\zeta(3)}{\zeta(6)}\right)^2<2.\] 
\end{proof}  
\begin{corollary} \label{Cor2.1} 
If $n\geq 3$ and $t\geq 2$ are integers, then there are no $n$-powerfully unitarily $t$-perfect numbers in any ring $\mathcal O_{\mathbb{Q}(\sqrt{d})}$ with $d\in K$. 
\end{corollary}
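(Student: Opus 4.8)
The plan is to derive the corollary directly from Theorem \ref{Thm2.1} by arguing by contradiction, so essentially no new machinery is needed. Suppose, toward a contradiction, that some $z\in\mathcal O_{\mathbb{Q}(\sqrt{d})}\backslash\{0\}$ were $n$-powerfully unitarily $t$-perfect in $\mathcal O_{\mathbb{Q}(\sqrt{d})}$ for integers $n\geq 3$ and $t\geq 2$. By Definition \ref{Def1.2} this means precisely that $I_n^*(z)=t$, and since $t\geq 2$ we have $I_n^*(z)\geq 2$. I would then split the argument according to the size of $n$.

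For the case $n\geq 4$, the first assertion of Theorem \ref{Thm2.1} immediately gives $I_n^*(z)<2$. Combining this with $I_n^*(z)=t\geq 2$ produces the contradiction $2\leq t=I_n^*(z)<2$. Hence no such $z$ can exist when $n\geq 4$, and this disposes of all but one value of $n$.

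The only case demanding a small extra observation is $n=3$, because the second assertion of Theorem \ref{Thm2.1} is merely conditional: it concludes $I_3^*(z)<2$ only under the hypothesis that $I_3^*(z)$ is rational. The key point is that this hypothesis holds automatically in our situation, since $I_3^*(z)=t$ and $t$ is an integer, hence certainly rational. Therefore Theorem \ref{Thm2.1} applies and yields $I_3^*(z)<2$, contradicting $I_3^*(z)=t\geq 2$ exactly as before.

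As both cases terminate in a contradiction, there can be no $n$-powerfully unitarily $t$-perfect numbers in any $\mathcal O_{\mathbb{Q}(\sqrt{d})}$ with $d\in K$ whenever $n\geq 3$ and $t\geq 2$, which is the claim. There is no genuine obstacle to this proof; the corollary is really just a repackaging of Theorem \ref{Thm2.1}, and the single place that calls for any care is recognizing that the integrality (and hence rationality) of $t$ is precisely what licenses the invocation of the conditional bound in the borderline case $n=3$.
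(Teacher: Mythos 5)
Your proof is correct and matches the paper's intent exactly: the paper states this corollary as an immediate consequence of Theorem \ref{Thm2.1}, with the key (implicit) observation being precisely the one you make, namely that $I_3^*(z)=t$ is an integer and hence rational, so the conditional bound for $n=3$ applies.
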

\begin{theorem} \label{Thm2.2}
Let us work in a ring $\mathcal O_{\mathbb{Q}(\sqrt{d})}$ with $d\in K$. Suppose $z\in\mathcal O_{\mathbb{Q}(\sqrt{d})}$ satisfies $I_n^*(z)=t$ for some $n\in\{1,2\}$ and $t\in\mathbb{N}\backslash\{1\}$. Then $N(z)$ is even.   
\end{theorem}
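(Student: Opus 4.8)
The plan is to argue by contradiction: assume $N(z)$ is odd and show that $I_n^*(z)$ cannot be an integer exceeding $1$. Writing $z\sim\prod_{j=1}^r\pi_j^{\alpha_j}$ as in Remark \ref{Rem1.2}, we have $I_n^*(z)=\prod_{j=1}^r(1+\vert\pi_j\vert^{-\alpha_j n})$. Since $N(z)=\prod_{j=1}^r N(\pi_j)^{\alpha_j}$, the hypothesis that $N(z)$ is odd means every $N(\pi_j)$ is odd; equivalently, no $\pi_j$ lies over the integer prime $2$. By Fact \ref{Fact1.2}, each $N(\pi_j)$ is therefore either $q^2$ for an odd inert prime $q$ or $p$ for an odd ramified or split prime $p$, so in every case $N(\pi_j)\geq 3$.

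For $n=2$ I would proceed directly, since each factor $1+N(\pi_j)^{-\alpha_j}$ is already rational. For $n=1$ the first key step is to show that $I_1^*(z)$ can be an integer only if every ramified or split $\pi_j$ occurs to an even power. A factor coming from a prime with $\vert\pi_j\vert=\sqrt{p}$ equals $1+p^{-\alpha_j/2}$, which can be written as $1+c_j\sqrt{p}$ with $c_j>0$ rational whenever $\alpha_j$ is odd. Grouping these factors by their underlying integer prime (a ramified prime contributes one factor, a split prime up to two from a conjugate pair), the product of all of them takes the form $\sum_{T}b_T\sqrt{\prod_{i\in T}p_i}$, where $T$ ranges over subsets of the set of distinct underlying odd primes and every coefficient $b_T$ is positive. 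As the radicals $\sqrt{\prod_{i\in T}p_i}$ are linearly independent over $\mathbb{Q}$, such a sum is rational only when the lone surviving term is $T=\emptyset$; positivity of the coefficients then forces there to be no odd-power ramified or split factors at all. This is the same rationality phenomenon invoked for $I_3^*$ in the proof of Theorem \ref{Thm2.1}.

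With this reduction in hand, both cases close via the same two estimates. Each surviving factor has the shape $1+m^{-\beta}$ with $m$ an odd integer $\geq 3$ and $\beta\geq 1$ (here $m=q$ or $m=q^2$ for inert primes and $m=p$ for even-power ramified or split primes), so its numerator $m^{\beta}+1$ is even while its denominator $m^{\beta}$ is odd. Clearing denominators in $t=\prod_{j=1}^r(1+m_j^{-\beta_j})$ shows $2^r\mid t$, whence $t\geq 2^r$. On the other hand, $m\geq 3$ gives $1+m^{-\beta}\leq\frac{4}{3}$ for each factor, so $t\leq\left(\frac{4}{3}\right)^r$. Combining, $2^r\leq\left(\frac{4}{3}\right)^r$, which is impossible for $r\geq 1$; and $r=0$ makes $z$ a unit, contradicting $t\geq 2$ by Theorem \ref{Thm1.2}(a). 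Hence $N(z)$ must be even.

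The main obstacle is the irrationality argument in the $n=1$ case: one must account for the fact that a split prime and its conjugate can both contribute odd-power factors over the \emph{same} integer prime $p$, so the bookkeeping should be organized by the underlying integer prime rather than by the primes $\pi_j$ themselves. The positivity of all expansion coefficients is what ultimately rules out cancellation, and the uniform bound $1+m^{-\beta}\leq\frac{4}{3}$, resting only on $m\geq 3$, is precisely the ingredient that converts the divisibility $2^r\mid t$ into a contradiction.
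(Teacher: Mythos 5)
Your proposal is correct and takes essentially the same route as the paper's own proof: assume $N(z)$ is odd, use rationality of $I_n^*(z)$ to force even exponents on ramified and split primes when $n=1$, deduce $2^r\mid t$ from the parity of the factors $1+m_j^{\beta_j}$, and contradict this with the bound $t\leq\left(\frac{4}{3}\right)^r$. The only differences are cosmetic: you spell out, via positivity of coefficients and linear independence of the radicals $\sqrt{\prod_{i\in T}p_i}$ over $\mathbb{Q}$, the reduction step the paper dismisses as ``easy to see,'' you clear denominators directly instead of passing through $\delta_n^*(z)=tN(z)$ (or $t\vert z\vert$), and you handle $r=0$ explicitly.
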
 
\begin{proof} 
Assume, for the sake of finding a contradiction, that $N(z)$ is odd. Write $\displaystyle{z\sim\prod_{i=1}^r\pi_j^{\alpha_j}}$, where, for all distinct $j,\ell\in\{1,2,\ldots,r\}$, $\pi_j$ is a prime, $\alpha_j$ is a positive integer, and $\pi_j\not\sim\pi_{\ell}$. Suppose that $n=1$. If $N(\pi_j)$ is an integer prime for some $j\in\{1,2,\ldots,r\}$, then it is easy to see that $\alpha_j$ must be even in order for $\displaystyle{I_1^*(z)=\prod_{i=1}^r(1+\vert\pi_j\vert^{-\alpha_j})}$ to be an integer (or even a rational number). This means that $\vert\pi_j\vert^{\alpha_j}$ is an integer for each $j\in\{1,2,\ldots,r\}$, so $\displaystyle{\delta_1^*(z)=\prod_{i=1}^r(1+\vert\pi_j\vert^{\alpha_j})}$ and $\displaystyle{\vert z\vert=\prod_{i=1}^r\vert\pi_j\vert^{\alpha_j}}$ are positive integers. Furthermore, $\vert\pi_j\vert^{\alpha_j}$ must be odd for each $j\in\{1,2,\ldots,r\}$, so $2^r\vert\delta_1^*(z)$ in $\mathbb{Z}$. As $\delta_1^*(z)=t\vert z\vert$ and $\vert z\vert$ is odd, we see that $2^r\vert t$ in $\mathbb{Z}$. However, $\displaystyle{t=I_1^*(z)=\prod_{i=1}^r(1+\vert\pi_j\vert^{-\alpha_j})\leq\prod_{i=1}^r\left(1+\frac{1}{3}\right)}=\left(\frac{4}{3}\right)^r$, which a contradiction.  
\par 
Now, suppose $n=2$. Then $\delta_2^*(z)$ and $N(z)$ are positive integers. Because $\displaystyle{\delta_2^*(z)=\prod_{i=1}^r(1+N(\pi_j)^{\alpha_j})}$ and $N(\pi_j)^{\alpha_j}$ is odd for each $j\in\{1,2,\ldots,r\}$, we see that $2^r\vert\delta_2^*(z)$ in $\mathbb{Z}$. Again, $2^r\vert t$ in $\mathbb{Z}$, which is a contradiction because $\displaystyle{t=I_2^*(z)=\prod_{i=1}^r(1+N(\pi_j)^{-\alpha_j})\leq\prod_{i=1}^r\left(1+\frac{1}{3}\right)}=\left(\frac{4}{3}\right)^r$. 
\end{proof} 
The rings $\mathcal O_{\mathbb{Q}(\sqrt{-1})}$ and $\mathcal O_{\mathbb{Q}(\sqrt{-3})}$ are two of the most heavily-studied \\ quadratic rings, so it is not surprising that they prove to be particularly interesting for our purposes. We proceed to prove a theorem about $2$-powerfully $t$-perfect numbers in each of these rings. 
\begin{theorem} \label{Thm2.3} 
Suppose $z$ is $2$-powerfully unitarily $t$-perfect in $\mathcal O_{\mathbb{Q}(\sqrt{-1})}$ for some integer $t\geq 2$. Then we may write $z=(1+i)^{\gamma}x$, where $x\in\mathcal O_{\mathbb{Q}(\sqrt{-1})}$ and $N(x)$ is odd. Also, $x$ has $\gamma+\upsilon_2(t)$ nonassociated prime divisors.  
\end{theorem}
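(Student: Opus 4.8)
The plan is to exploit Theorem \ref{Thm2.2} to pin down the power of $1+i$ dividing $z$, and then to read off the count of prime divisors by tracking the $2$-adic valuation of the defining identity $\delta_2^*(z)=t\,N(z)$. The real content will be a congruence fact peculiar to $\mathcal O_{\mathbb{Q}(\sqrt{-1})}$.

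First I would note that $I_2^*(z)=t$ is an integer exceeding $1$, so Theorem \ref{Thm2.2} guarantees that $N(z)$ is even. By Facts \ref{Fact1.2} and \ref{Fact1.3}, the only prime of even norm in $\mathcal O_{\mathbb{Q}(\sqrt{-1})}$ is $1+i$, the prime lying over the ramified integer prime $2$, with $N(1+i)=2$. Hence $N(z)$ even forces $\gamma:=\rho_{1+i}(z)\geq 1$, and writing $z=(1+i)^\gamma x$ with $1+i\nmid x$ yields an $x$ with $N(x)$ odd, as claimed. It then remains to count the nonassociated prime divisors of $x$: writing $x\sim\prod_{j=1}^s\pi_j^{\alpha_j}$ with the $\pi_j$ pairwise nonassociated primes, none associated to $1+i$, the goal is to prove $s=\gamma+\upsilon_2(t)$.

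Next, using the product formula from Remark \ref{Rem1.2} together with $I_2^*(z)=\delta_2^*(z)/N(z)$, I would write $\delta_2^*(z)=(1+2^\gamma)\prod_{j=1}^s\left(1+N(\pi_j)^{\alpha_j}\right)$ and $N(z)=2^\gamma\prod_{j=1}^s N(\pi_j)^{\alpha_j}$, and compare $2$-adic valuations in $\delta_2^*(z)=t\,N(z)$. Since $\gamma\geq 1$ the factor $1+2^\gamma$ is odd, and since each $N(\pi_j)$ is odd we get $\upsilon_2(N(z))=\gamma$. Thus $\upsilon_2(t)=\upsilon_2(\delta_2^*(z))-\gamma=\sum_{j=1}^s\upsilon_2\!\left(1+N(\pi_j)^{\alpha_j}\right)-\gamma$, so the theorem reduces to showing that each summand $\upsilon_2\!\left(1+N(\pi_j)^{\alpha_j}\right)$ equals exactly $1$.

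The crux is the congruence $N(\pi_j)\equiv 1\imod{4}$ for every odd-norm prime $\pi_j$ in $\mathcal O_{\mathbb{Q}(\sqrt{-1})}$, and this is the step I expect to carry the weight. By Fact \ref{Fact1.2} such a $\pi_j$ is either inert, with $N(\pi_j)=q^2$ for an inert integer prime $q$, or split, with $N(\pi_j)=p$ for a split integer prime $p$; the ramified case is excluded because its only prime $1+i$ has even norm. In the inert case Fact \ref{Fact1.3} gives $q\equiv 3\imod{4}$, whence $q^2\equiv 1\imod{4}$, and in the split case Fact \ref{Fact1.3} gives $p\equiv 1\imod{4}$ directly. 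Either way $N(\pi_j)\equiv 1\imod{4}$, so $N(\pi_j)^{\alpha_j}\equiv 1\imod{4}$ and $1+N(\pi_j)^{\alpha_j}\equiv 2\imod{4}$, forcing $\upsilon_2\!\left(1+N(\pi_j)^{\alpha_j}\right)=1$. Summing over $j$ gives $\sum_{j=1}^s\upsilon_2\!\left(1+N(\pi_j)^{\alpha_j}\right)=s$, and substituting into the valuation computation yields $s=\gamma+\upsilon_2(t)$, completing the argument.
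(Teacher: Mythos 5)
Your proof is correct and follows essentially the same route as the paper's: the key congruence $N(\pi_j)\equiv 1\imod{4}$ for odd-norm primes (via the inert/split dichotomy of Facts \ref{Fact1.2} and \ref{Fact1.3}) and a comparison of $2$-adic valuations in $\delta_2^*(z)=t\,N(z)$. The only difference is cosmetic: you explicitly invoke Theorem \ref{Thm2.2} to secure $\gamma\geq 1$ (so that $1+2^\gamma$ is odd), a point the paper leaves implicit.
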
 
\begin{proof} 
Let us write $\displaystyle{x\sim\prod_{j=1}^r\pi_j^{\alpha_j}}$, where, for all distinct $j,\ell\in\{1,2,\ldots,r\}$, $\pi_j$ is a prime, $\alpha_j$ is a positive integer, and $\pi_j\not\sim\pi_{\ell}$. From Fact \ref{Fact1.3}, we know that an integer prime is inert in $\mathcal O_{\mathbb{Q}(\sqrt{-1})}$ if and only if it is congruent to $3$ modulo $4$. Therefore, if we choose any $j\in\{1,2,\ldots,r\}$, then either $N(\pi_j)=q^2$ for some integer prime $q$ that is congruent to $3$ modulo $4$ or $N(\pi_j)=p$ for some integer prime $p$ that is congruent to $1$ modulo $4$. Either way, $N(\pi_j)\equiv 1\imod{4}$, so $\displaystyle{\upsilon_2(\delta_2^*(x))=\upsilon_2\left(\prod_{i=1}^r(1+N(\pi_j)^{\alpha_j})\right)=r}$. Then the desired result follows from the equation $(2^{\gamma}+1)\delta_2^*(x)=2^{\gamma}tN(x)$ and the fact that $\upsilon_2(N(x))=\upsilon_2(2^{\gamma}+1)=0$.  
\end{proof}
\begin{theorem} \label{Thm2.4} 
Let us work in the ring $\mathcal O_{\mathbb{Q}(\sqrt{-3})}$. If $a$ and $b$ are relatively prime positive integers and $3\vert a$ in $\mathbb{Z}$, then $\displaystyle{\frac{a}{b}}$ is not in the range of the function $I_2^*$. 
\end{theorem}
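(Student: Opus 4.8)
The plan is to reduce the statement to a single claim about the $3$-adic valuation of $I_2^*(z)$. Writing $z\sim\prod_{j=1}^r\pi_j^{\alpha_j}$ in the usual way and invoking Remark \ref{Rem1.2}, we have
\[I_2^*(z)=\prod_{j=1}^r\left(1+N(\pi_j)^{-\alpha_j}\right)=\frac{\prod_{j=1}^r\left(N(\pi_j)^{\alpha_j}+1\right)}{N(z)},\]
a rational number whose denominator is $N(z)=\prod_j N(\pi_j)^{\alpha_j}$. Extending the valuation $\upsilon_3$ to $\mathbb{Q}$ in the obvious way, I note that for a fraction $a/b$ in lowest terms the condition ``$3\mid a$ and $\gcd(a,b)=1$'' is exactly $\upsilon_3(a/b)>0$. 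Thus it suffices to prove that $\upsilon_3(I_2^*(z))\leq 0$ for every nonzero $z$.

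The heart of the argument, and the step I expect to require the most care, is the observation that in $\mathcal O_{\mathbb{Q}(\sqrt{-3})}$ every prime $\pi$ satisfies $N(\pi)\equiv 1\imod{3}$, with the sole exception of the ramified prime lying above $3$, which has $N(\pi)=3$. One clean way to see this is to note that a general element is $a+b\left(\frac{1+\sqrt{-3}}{2}\right)$ with $a,b\in\mathbb{Z}$, whose norm is $a^2+ab+b^2\equiv (a-b)^2\imod{3}$; hence every norm --- in particular every $N(\pi)$ --- is congruent to $0$ or $1$ modulo $3$, and the case $N(\pi)\equiv 0$ occurs exactly when $\pi$ is the prime above the integer prime $3$ (the only ramified prime, by Facts \ref{Fact1.2} and \ref{Fact1.3}), in which case $N(\pi)=3$. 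Alternatively, one can argue case by case: inert primes have norm $q^2\equiv 1$, and a split prime $p$ satisfies $p\equiv 1\imod{3}$ by the quadratic residue criterion of Fact \ref{Fact1.3}.

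Granting this observation, the valuation computation is immediate. For each $\pi_j$ not lying above $3$ we have $N(\pi_j)^{\alpha_j}\equiv 1\imod{3}$, so $N(\pi_j)^{\alpha_j}+1\equiv 2\imod{3}$ contributes nothing to $\upsilon_3$ of the numerator, while $3\nmid N(\pi_j)$ contributes nothing to $\upsilon_3$ of the denominator. If the prime above $3$ divides $z$ with multiplicity $\alpha\geq 0$, then the corresponding factor $3^{\alpha}+1\equiv 1\imod{3}$ again contributes nothing to the numerator, whereas it contributes exactly $\alpha$ to $\upsilon_3(N(z))$. Therefore $\upsilon_3(I_2^*(z))=-\alpha\leq 0$, so writing $I_2^*(z)=a/b$ in lowest terms forces $3\nmid a$. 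This contradicts the hypothesis $3\mid a$ and shows that no such $a/b$ lies in the range of $I_2^*$.
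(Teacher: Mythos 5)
Your proof is correct, and it takes a route that differs from the paper's in two worthwhile ways, even though both arguments share the same arithmetic core: every prime of $\mathcal O_{\mathbb{Q}(\sqrt{-3})}$ except the one above $3$ has norm $\equiv 1\imod{3}$, so no factor $1+N(\pi_j)^{\alpha_j}$ is divisible by $3$. The paper argues by contradiction: from $b\delta_2^*(z)=aN(z)$ it deduces $3\vert\delta_2^*(z)$ in $\mathbb{Z}$, hence that some prime $\pi_0$ satisfies $N(\pi_0)^{\rho_{\pi_0}(z)}\equiv 2\imod{3}$, and then eliminates the inert, ramified, and split possibilities one at a time via Facts \ref{Fact1.2} and \ref{Fact1.3}; its split case silently invokes the unproved supplementary fact that $-3$ is a quadratic residue modulo $p$ only if $p\equiv 1\imod{3}$. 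You instead prove the sharper direct statement $\upsilon_3(I_2^*(z))=-\rho_{\sqrt{-3}}(z)\leq 0$, and you obtain the classification of norms from the single congruence $a^2+ab+b^2\equiv(a-b)^2\imod{3}$, which shows every norm is a square modulo $3$ and needs no quadratic-residue input at all (your ``alternative'' case-by-case argument is essentially the paper's proof, with the same reliance on that residue fact). What each buys: the paper's version is marginally shorter given its stated Facts, while yours is more self-contained at the one point where the paper leans on an unproved step, and it yields strictly more information --- the exact $3$-adic valuation of every value of $I_2^*$, rather than bare nonmembership of certain fractions in the range. One cosmetic point: you reuse the letters $a,b$ for the coordinates of a ring element when they are already reserved for the fraction in the theorem statement; rename one pair to avoid the clash.
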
 
\begin{proof} 
For the sake of finding a contradiction, suppose $\displaystyle{I_2^*(z)=\frac{a}{b}}$
for some $z\in\mathcal O_{\mathbb{Q}(\sqrt{-3})}$. Then $b\delta_2^*(z)=aN(z)$, which implies that $3\vert \delta_2^*(z)$ in $\mathbb{Z}$. This means that there must be some prime $\pi_0$ such that $N(\pi_0)^{\rho_{\pi_0}(z)}\equiv 2\imod{3}$. Fact \ref{Fact1.3} tells us that an integer prime is inert in $\mathcal O_{\mathbb{Q}(\sqrt{-3})}$ if and only if it is congruent to $2$ modulo $3$. If $N(\pi_0)=q^2$ for some inert integer prime $q$, then $N(\pi_0)^{\rho_{\pi_0}(z)}=q^{2\rho_{\pi_0}(z)}\equiv 1\imod{3}$, which is a contradiction. Clearly $\pi_0\not\sim 3$, so $N(\pi_0)$ must be a split integer prime. However, this means that $N(\pi_0)\equiv 1\imod{3}$, so $N(\pi_0)^{\rho_{\pi_0}(z)}\equiv 1\imod{3}$, which is a contradiction.  
\end{proof} 
\begin{corollary} \label{Cor2.2} 
If $t$ is a positive integer multiple of $3$, then there are no $2$-powerfully unitarily $t$-perfect numbers in $\mathcal O_{\mathbb{Q}(\sqrt{-3})}$. 
\end{corollary}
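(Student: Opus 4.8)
The plan is to deduce this immediately from Theorem \ref{Thm2.4}. By Definition \ref{Def1.2}, a $2$-powerfully unitarily $t$-perfect number in $\mathcal O_{\mathbb{Q}(\sqrt{-3})}$ is a nonzero $z$ with $I_2^*(z)=t$, so the corollary is equivalent to the assertion that no positive integer multiple of $3$ lies in the range of the function $I_2^*$. The whole point is that Theorem \ref{Thm2.4} already rules out a large family of rationals, and an integer $t$ is just a special case of such a rational.

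Concretely, first I would write the integer $t$ as the fraction $\displaystyle{\frac{t}{1}}$. Since $\gcd(t,1)=1$ and, by hypothesis, $3\vert t$ in $\mathbb{Z}$, the pair $(a,b)=(t,1)$ satisfies the hypotheses of Theorem \ref{Thm2.4}. That theorem therefore guarantees that $\displaystyle{\frac{t}{1}=t}$ is not in the range of $I_2^*$. Hence there is no $z\in\mathcal O_{\mathbb{Q}(\sqrt{-3})}\backslash\{0\}$ with $I_2^*(z)=t$, which is exactly the statement that no $2$-powerfully unitarily $t$-perfect number exists in this ring.

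There is essentially no obstacle to overcome: the corollary is a direct specialization of Theorem \ref{Thm2.4} to an integer value, and all the genuine work has already been done in proving that theorem. The only bookkeeping point worth noting is that the definition of a $t$-perfect number requires $t\geq 2$; but every positive integer multiple of $3$ is at least $3$, so this constraint is automatically met and places no extra restriction on the argument.
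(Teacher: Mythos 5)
Your proof is correct and is precisely the argument the paper intends: the corollary is stated as an immediate consequence of Theorem \ref{Thm2.4}, obtained by taking $a=t$ and $b=1$ exactly as you do. Your remark that $t\geq 3$ automatically satisfies the $t\geq 2$ requirement in Definition \ref{Def1.2} is a nice bit of care that the paper leaves implicit.
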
 
Now, let us work in rings $\mathcal O_{\mathbb{Q}(\sqrt{d})}$ with $d\in K\backslash\{-7\}$ so that $2$ does not split. Then there is a unique prime $\xi(d)\in\mathcal O_{\mathbb{Q}(\sqrt{d})}\cap A(d)$ of minimal even norm. Namely, 
\[\xi(d)=\begin{cases} 1+i, & \mbox{if } d=-1; \\ \sqrt{-2}, & \mbox{if } d=-2; \\ 2, & \mbox{if } d\in K\backslash\{-1,-2,-7\}. \end{cases}\]
Suppose $z$ is a $2$-powerfully unitarily $t$-perfect number in $\mathcal O_{\mathbb{Q}(\sqrt{d})}$ for $d\in K\backslash\{-7\}$ and $t\in\mathbb{N}\backslash\{1\}$. By Theorem \ref{Thm2.1}, we see that we may write $z\sim(\xi(d))^{\mu}x_0$, where $\mu\in\mathbb{N}$, $x_0\in\mathcal O_{\mathbb{Q}(\sqrt{d})}$, and $2\nmid N(x_0)$ in $\mathbb{Z}$. Furthermore, if $d\in\{-1,-2\}$, then we have $2^{\mu}+1=\delta_2^*((\xi(d))^{\mu})\vert\delta_2^*(z)=tN(z)$. Hence, if we assume that $3\nmid N(z)$ in $\mathbb{Z}$, then $\mu$ must be even. Therefore, under the assumption that $3\nmid N(z)$ in $\mathbb{Z}$, we may write 
\[\gamma=\begin{cases} \frac{1}{2}\mu, & \mbox{if } d\in\{-1,-2\}; \\ \mu, & \mbox{if } d\in K\backslash\{-1,-2,-7\} \end{cases}\]
so that $z\sim 2^{\gamma}x_0$. Then $z=2^{\gamma}x$, where $x$ is an associate of $x_0$.  
\par 
When M. V. Subbarao and L. J. Warren studied unitary perfect numbers, which are positive integers $n$ that satisfy $\sigma^*(n)=2n$, they noticed that all known unitary perfect numbers are multiples of $3$. They then gave four conditions that any unitary perfect numbers not divisible by $3$ would need to satisfy \cite{Subbarao}. Using the information discussed in the preceding paragraph, we will find analogues of the conditions that Subbarao and Warren established. 
\begin{theorem} \label{Thm2.5} 
Let $d\in K\backslash\{-7\}$. Suppose $z$ is $2$-powerfully perfect in $\mathcal O_{\mathbb{Q}(\sqrt{d})}$ and $3\nmid N(z)$ in $\mathbb{Z}$. Then we may write $z=2^{\gamma}x$, where $\gamma\in\mathbb{N}$, $x\in\mathcal O_{\mathbb{Q}(\sqrt{d})}$, and $N(x)$ is odd. For any prime $\pi$, we have $N(\pi)^{\rho_{\pi}(x)}\equiv 1\imod{6}$. Furthermore, there exists a prime divisor $\pi_0$ of $x$ such that $N(\pi_0)\equiv 5\imod{6}$, and $x$ has an even number of nonassociated prime factors. 
\end{theorem}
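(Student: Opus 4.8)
The plan is to reduce everything to the single Diophantine relation coming from $I_2^*(z)=2$. Since (as established in the paragraph preceding the statement) we may write $z=2^{\gamma}x$ with $2^{\gamma}$ and $x$ coprime and $N(x)$ odd, multiplicativity of $I_2^*$ (Theorem~\ref{Thm1.2}) gives $I_2^*(2^{\gamma})I_2^*(x)=2$. In every ring $\mathcal O_{\mathbb{Q}(\sqrt{d})}$ with $d\in K\backslash\{-7\}$ the $2$-part of $z$ satisfies $\delta_2^*(\cdot)=1+4^{\gamma}$ and $N(\cdot)=4^{\gamma}$ (whether $2$ is inert, so the prime is $2$ with $N(2)=4$, or $2$ ramifies, so the prime $\xi(d)$ has $N(\xi(d))=2$ and appears to the power $2\gamma$), so $I_2^*(2^{\gamma})=\frac{1+4^{\gamma}}{4^{\gamma}}$. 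Writing $x\sim\prod_{j=1}^r\pi_j^{\alpha_j}$ and $M_j=N(\pi_j)^{\alpha_j}$, I would record the master equation
\[(4^{\gamma}+1)\,\delta_2^*(x)=2\cdot 4^{\gamma}N(x),\qquad \delta_2^*(x)=\prod_{j=1}^r(1+M_j),\quad N(x)=\prod_{j=1}^r M_j.\]

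For the congruence $N(\pi)^{\rho_{\pi}(x)}\equiv 1\imod{6}$ I would argue separately modulo $2$ and modulo $3$. Modulo $2$ it is immediate: $N(x)$ is odd, so each $M_j$ is odd. Modulo $3$, since $4\equiv 1\imod{3}$, reducing the master equation gives $2\,\delta_2^*(x)\equiv 2N(x)\imod{3}$, hence $\delta_2^*(x)\equiv N(x)\imod{3}$; as $3\nmid N(z)$ forces $3\nmid N(x)$, we get $3\nmid\delta_2^*(x)=\prod(1+M_j)$, so no factor $1+M_j$ is divisible by $3$, i.e.\ each $M_j\equiv 1\imod{3}$. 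Combining the two residues via the Chinese Remainder Theorem yields $M_j\equiv 1\imod{6}$ for every $j$ (and the statement is vacuous when $\pi\nmid x$, since then $\rho_\pi(x)=0$). The parity of the number of prime factors then falls out of the same two computations: each $M_j\equiv 1\imod{3}$ gives $N(x)\equiv 1\imod{3}$ and $\delta_2^*(x)=\prod(1+M_j)\equiv 2^r\imod{3}$, so $\delta_2^*(x)\equiv N(x)\imod{3}$ becomes $2^r\equiv 1\imod{3}$, forcing $r$ even.

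The existence of a prime $\pi_0\mid x$ with $N(\pi_0)\equiv 5\imod{6}$ is the crux, and I expect it to be the main obstacle. Because $\gcd(4^{\gamma}+1,\,2\cdot 4^{\gamma})=1$, the master equation shows $4^{\gamma}+1\mid N(x)$ in $\mathbb{Z}$. Since $4^{\gamma}+1\equiv 2\imod{3}$, it must have a prime factor $\ell\equiv 2\imod{3}$ occurring to an odd power, and this $\ell$ divides $N(x)=\prod N(\pi_j)^{\alpha_j}$. If $\ell$ is split or ramified, then some $\pi_j$ has $N(\pi_j)=\ell\equiv 5\imod{6}$ and we are done. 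The difficulty is the inert case: an inert $\ell$ contributes $N(\pi_j)=\ell^2\equiv 1\imod{6}$, which does not furnish the desired prime.

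To close this gap I would try to show that a prime factor of $4^{\gamma}+1$ that sits to an odd power and is $\equiv 2\imod{3}$ cannot be inert. For $d=-1$ this is automatic: $4^{\gamma}+1=(2^{\gamma})^2+1$ is a sum of two coprime squares, so every prime $\equiv 3\imod{4}$ divides it to an even power; hence a prime $\equiv 2\imod{3}$ appearing to an odd power is $\equiv 1\imod{4}$, which by Fact~\ref{Fact1.3} splits in $\mathcal O_{\mathbb{Q}(\sqrt{-1})}$, giving $N(\pi_0)=\ell\equiv 5\imod{6}$. For the remaining values of $d$ I would look for an analogous constraint linking the splitting behavior of $\ell$ (governed, via Fact~\ref{Fact1.3}, by whether $d$ is a quadratic residue modulo $\ell$) to the residue of $\ell$ modulo the power of $2$ dividing $4^{\gamma}+1$. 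Pinning down this last implication uniformly in $d$ is where the genuine work lies, since divisibility of $N(x)$ by $4^\gamma+1$ alone does not rule out an inert $\ell$ absorbing its odd-power contribution through a compensating factor of $\gcd(N(x),\delta_2^*(x))$.
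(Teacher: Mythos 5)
Your handling of the decomposition, the master equation, the congruence $N(\pi)^{\rho_{\pi}(x)}\equiv 1\imod{6}$, and the parity of $r$ is correct, and it is essentially the paper's argument: the paper gets the congruence from the divisibility $1+N(\pi)^{\rho_{\pi}(x)}\vert\delta_2^*(x)$ together with $(2^{2\gamma}+1)\delta_2^*(x)=2^{2\gamma+1}N(x)$ and $3\nmid N(x)$, and gets $r$ even by reducing that same equation modulo $3$, exactly as you do.

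The gap you flag is real: your proposal proves the existence of $\pi_0$ with $N(\pi_0)\equiv 5\imod{6}$ only for $d=-1$, so as a proof of the theorem it is incomplete. But you should know that the paper's own treatment of this step is a single sentence asserting that the existence of $\pi_0$ ``follows from the fact that $2^{2\gamma}+1\equiv 5\imod{6}$''; that is, the paper makes precisely the inference you declined to make, passing from ``some rational prime $\ell\equiv 5\imod{6}$ divides $N(x)$'' to ``some prime divisor of $x$ has norm $\equiv 5\imod{6}$'' without excluding the case that $\ell$ is inert, where the only prime of $\mathcal O_{\mathbb{Q}(\sqrt{d})}$ above $\ell$ has norm $\ell^2\equiv 1\imod{6}$. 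The concern is concrete: for $d=-2$ and $\gamma=1$ we have $4^{\gamma}+1=5$, and $5$ is inert in $\mathcal O_{\mathbb{Q}(\sqrt{-2})}$ since $-2$ is a quadratic nonresidue modulo $5$, so $5\mid N(x)$ produces only the prime divisor $5$ of $x$, whose norm is $25\equiv 1\imod{6}$. Moreover, the gap cannot be closed by congruence bookkeeping alone: if every $N(\pi_j)\equiv 1\imod{6}$, then every rational prime $\ell\equiv 5\imod{6}$ divides $N(x)$ to even order, and the valuation identity $\upsilon_{\ell}(4^{\gamma}+1)+\upsilon_{\ell}(\delta_2^*(x))=\upsilon_{\ell}(N(x))$ can hold with both terms on the left odd, so no parity contradiction arises---which matches your closing remark about a compensating factor inside $\gcd(N(x),\delta_2^*(x))$. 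In short: what you proved, you proved the same way the paper does; your $d=-1$ argument (every odd prime factor of $(2^{\gamma})^2+1$ is $\equiv 1\imod{4}$, hence splits in $\mathbb{Z}[i]$) actually goes beyond the paper; and the part you could not finish is not an idea you missed but a genuine gap in the published proof itself, which at present is only rigorously established for $d=-1$.
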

\begin{proof} 
We already established that we may write $z=2^{\gamma}x$ for $\gamma\in\mathbb{N}$. As $\delta_2^*(2^{\gamma})=2^{2\gamma}+1$ and $N(2^\gamma)=2^{2\gamma}$, we see that $(2^{2\gamma}+1)\delta_2^*(x)=2^{2\gamma+1}N(x)$. Now, let $\pi$ be a prime. We wish to show that $N(\pi)^{\rho_{\pi}(x)}\equiv 1\imod{6}$. The result is clear if $\rho_{\pi}(x)=0$, and if $\rho_{\pi}(x)>0$, the result is still quite trivial when we consider that $1+N(\pi)^{\rho_{\pi}(x)}\vert\delta_2^*(x)$ in $\mathbb{Z}$. The fact that there exists some prime divisor $\pi_0$ of $x$ such that $N(\pi_0)\equiv 5\imod{6}$ follows from the fact that $2^{2\gamma}+1\equiv 5\imod{6}$. Finally, to show that $x$ has an even number of nonassociated prime divisors, we use the fact that $N(\pi)^{\rho_{\pi}(x)}\equiv 1\imod{6}$ for all primes $\pi$. This implies that $N(x)\equiv 1\imod{3}$. As $2^{2\gamma+1}\equiv 2^{2\gamma}+1\equiv 2\imod{3}$, we see that $\delta_2^*(x)\equiv 1\imod{3}$. Let us write $\displaystyle{x\sim\prod_{j=1}^r\pi_j^{\alpha_j}}$, where, for all distinct $j,\ell\in\{1,2,\ldots,r\}$, $\pi_j$ is a prime, $\alpha_j$ is a positive integer, and $\pi_j\not\sim\pi_{\ell}$. Then $\displaystyle{\delta_2^*(x)=\prod_{j=1}^r(1+N(\pi_j)^{\alpha_j}})\equiv\prod_{j=1}^r(2)\imod{3}$, so $r$ must be even. 
\end{proof}
We pause to mention that we may easily establish results analogous to those given in Theorem \ref{Thm2.5} in the ring $\mathcal O_{\mathbb{Q}(\sqrt{-7})}$. In this ring, $2$ splits as $2=\varepsilon\overline{\varepsilon}$, where $\displaystyle{\varepsilon=\frac{1+\sqrt{-7}}{2}}$. Suppose that $z$ is $2$-powerfully unitarily perfect in $\mathcal O_{\mathbb{Q}(\sqrt{-7})}$ and that $3\nmid N(z)$ in $\mathbb{Z}$. Then we may write $z=\varepsilon^{\gamma_1}\overline{\varepsilon}^{\gamma_2}x$, where $x\in\mathcal O_{\mathbb{Q}(\sqrt{-7})}$ and $N(x)$ is odd. If $\gamma_1\neq 0$ and $\gamma_2\neq 0$, then $(2^{\gamma_1}+1)(2^{\gamma_2}+1)\delta_2^*(x)=2^{\gamma_1+\gamma_2+1}N(x)$. On the other hand, if $\gamma_1=0$ or $\gamma_2=0$ ($\gamma_1$ and $\gamma_2$ cannot both be $0$ by Theorem \ref{Thm2.1}), then we may write $\gamma=\gamma_1+\gamma_2$ to get $(2^{\gamma}+1)\delta_2^*(x)=2^{\gamma+1}N(x)$. Because $3\nmid N(x)$ in $\mathbb{Z}$, we know that $\gamma_1$ and $\gamma_2$ must be even and that $N(\pi)^{\rho_{\pi}(x)}\equiv 1\imod{6}$ for all primes $\pi$. Furthermore, because $2^{\gamma_1}+1\equiv 2^{\gamma_2}+1\equiv 5\imod{6}$, we see that $x$ must have some prime divisor whose norm is congruent to $5$ modulo $6$. Finally, if $\gamma_1\neq 0$ and $\gamma_2\neq 0$, then $(2^{\gamma_1}+1)(2^{\gamma_2}+1)\equiv 1\imod{3}$ and $2^{\gamma_1+\gamma_2+1}N(x)\equiv 2\imod{3}$, so $x$ must have an odd number of nonassociated prime divisors. If $\gamma_1=0$ or $\gamma_2=0$, then $x$ must have an even number of nonassociated prime divisors because $2^{\gamma}+1\equiv 2^{\gamma+1}N(x)\equiv 2\imod{3}$. 
\par 
We end with a note about unitarily $t$-perfect numbers. If $d\in K$ and $t\geq 2$ is an integer, then we can find a unitarily $t$-perfect number in $\mathcal O_{\mathbb{Q}(\sqrt{d})}$ for every unitary $t$-perfect number in $\mathbb{Z}$. We formalize and generalize this notion in the following theorem.
\begin{theorem} \label{Thm2.6} 
Let $b>1$ be a rational number, and let $d\in K$. Let $U(b)=\{n\in\mathbb{N}\colon\sigma^*(n)=bn\}$, and let $V_d(b)=\{z\in A(d)\colon I_1^*(z)=b\}$. Then there exists an injective function $g\colon U(b)\rightarrow V_d(b)$. 
\end{theorem}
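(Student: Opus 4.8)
The plan is to build $g$ explicitly from the factorization of $n$ in $\mathbb{Z}$, exploiting the product formulas for the two abundancy-type quantities involved. Recall that for $n=\prod_i p_i^{a_i}$ in $\mathbb{Z}$ one has $\sigma^*(n)/n=\prod_i(1+p_i^{-a_i})$, so $n\in U(b)$ says precisely that $\prod_i(1+p_i^{-a_i})=b$. On the other side, Remark \ref{Rem1.2} gives $I_1^*(z)=\prod_j(1+\vert\pi_j\vert^{-\alpha_j})$ whenever $z\sim\prod_j\pi_j^{\alpha_j}$. The idea is therefore to produce, for each integer prime power $p_i^{a_i}$ exactly dividing $n$, a single prime-power factor of $z$ whose contribution to $I_1^*$ is exactly $1+p_i^{-a_i}$.

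I would define $g(n)$ factor by factor according to the splitting type of $p_i$ in $\mathcal O_{\mathbb{Q}(\sqrt d)}$, as governed by Facts \ref{Fact1.1}--\ref{Fact1.3}. If $p_i$ is inert, I use $p_i^{a_i}$ itself, noting that then $\vert p_i\vert=p_i$, so this contributes $1+p_i^{-a_i}$. If $p_i$ ramifies, I write $p_i\sim\pi_i^2$ and use $\pi_i^{2a_i}$; since $\vert\pi_i\vert=\sqrt{p_i}$, this again contributes $1+\vert\pi_i\vert^{-2a_i}=1+p_i^{-a_i}$. If $p_i$ splits, I write $p_i=\pi_i\overline{\pi_i}$ and use $\pi_i^{2a_i}$, where I fix once and for all a choice of one of the two conjugate primes for each split $p_i$. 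I then let $g(n)$ be the unique associate in $A(d)$ of the product of these factors. The split case is the crux: the naive embedding of $n$ would send $p_i^{a_i}$ to $\pi_i^{a_i}\overline{\pi_i}^{a_i}$, contributing $(1+p_i^{-a_i/2})^2\neq 1+p_i^{-a_i}$, so I deliberately use a single conjugate raised to the doubled exponent. Because the factors attached to distinct $p_i$ are pairwise non-associated and hence relatively prime, multiplicativity (Theorem \ref{Thm1.2}(b)) together with the per-factor computation gives $I_1^*(g(n))=\prod_i(1+p_i^{-a_i})=\sigma^*(n)/n=b$, so that $g(n)\in V_d(b)$.

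For injectivity I would reconstruct $n$ from the factorization $z=g(n)\sim\prod_j\pi_j^{\beta_j}$ in $A(d)$. By Fact \ref{Fact1.2} each prime $\pi_j$ has a well-defined type: if $\pi_j$ is inert then $N(\pi_j)=q^2$ and it records the integer prime $q$ with exponent $\beta_j$, while if $\pi_j$ is ramified or split then $N(\pi_j)=p$ and it records $p$ with exponent $\beta_j/2$. Reading these data off recovers a single integer $n$, and since $g$ always employs the same fixed conjugate for each split prime, the factorization of $g(n)$ obeys exactly these reconstruction rules; hence $g(n)=g(m)$ forces identical factorizations and thus $n=m$. The point requiring care is that this reconstruction be unambiguous, namely that the three prime types cannot be confused and that the fixed choice of conjugate keeps $g$ well defined; granting this, injectivity is immediate.
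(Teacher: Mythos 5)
Your proposal is correct and is essentially the paper's own proof: both constructions send each non-split prime power $p^{a}$ to (an associate of) $p^{a}$ (your ramified case $\pi^{2a}\sim p^{a}$ is the same element up to units), send each split prime power to the square of one fixed conjugate prime raised to $a$, take the associate in $A(d)$, and then compute $I_1^*(g(n))=\prod_i\left(1+p_i^{-a_i}\right)=b$ by multiplicativity. Your reconstruction argument for injectivity simply spells out the paper's one-line appeal to unique factorization in $\mathcal O_{\mathbb{Q}(\sqrt{d})}$.
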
 
\begin{proof} 
If $p$ is an integer prime that does not split in $\mathcal O_{\mathbb{Q}(\sqrt{d})}$, let $g(p)=p$. If $p$ is an integer prime that splits in $\mathcal O_{\mathbb{Q}(\sqrt{d})}$ as $p=\pi\overline{\pi}$, where $\pi\in A(d)$, let $g(p)$ be the associate of $\pi^2$ in $A(d)$. Now, for any positive integer $n\in U(b)$ with canonical prime factorization $\displaystyle{n=\prod_{j=1}^r p_j^{\alpha_j}}$, let $g(n)$ be the associate of $\displaystyle{\prod_{j=1}^r g(p_j)^{\alpha_j}}$ that lies in $A(d)$. It is easy to see, using the fact that $\mathcal O_{\mathbb{Q}(\sqrt{d})}$ is a unique factorization domain, that $g$ is an injection. To show that the range of $g$ is a subset of $V_d(b)$, note that $\vert g(p)\vert=p$ for all primes $p$. Therefore, with $n$ as before, we have
\[I_1^*(g(n))=I_1^*\left(\prod_{j=1}^r g(p_j)^{\alpha_j}\right)=\prod_{j=1}^r\left(1+\vert g(p_j)\vert^{-\alpha_j}\right)\] 
\[=\prod_{j=1}^r\left(1+p_j^{-\alpha_j}\right)=\frac{\sigma^*(n)}{n}=b.\]  
\end{proof} 
\section{Ideas for Further Research} 
With Theorem \ref{Thm1.2} as evidence, we see that the functions $\delta_n^*$ and $I_n^*$ have some fairly nice properties that we may exploit for further research. We pose some ideas here. 
\par 
First, we note that we could generalize the ideas presented in this paper to other quadratic rings. However, if we choose to continue working with imaginary quadratic rings that are unique factorization domains, we could still look at analogues of many other objects defined in the integers. For example, one might wish to investigate analogues of superperfect numbers and unitary superperfect numbers. One could also look at analogues of biunitary or even infinitary divisor functions in quadratic rings. 
\par 
There are also plenty of questions left open related to the ideas discussed in this paper. For example, the author has made no attempt to actually find $n$-powerfully unitarily $t$-perfect numbers, so it is likely that many could be quite easy to discover. One question of particular interest is the following. For a given $d\in K$, what are the rational numbers $b>1$ for which the function $g\colon U(b)\rightarrow V_d(b)$ defined in the proof of Theorem \ref{Thm2.6} is bijective?

\end{document}